\newcommand{\z}{\phantom{0}}
\renewcommand{\d}{\mathrm{d}}
\newcommand{\vect}[1]{\boldsymbol{#1}}
\newtheorem{ass}{Assumption} 
\newtheorem{lemma}{Lemma}
\newtheorem{prop}{Proposition}
\newtheorem{theorem}{Theorem}
\newtheorem{rmk}{Remark}
\numberwithin{equation}{section}
\newcommand{\ds}{\ d s}
\newcommand{\dx}{\ d \vect{x}}
\newcommand{\dFx}{ \ d \vect{\widehat{x}}}
\def\numpatch{\mathcal{N}_{patch}}
\def\B{\color{black}}
\begin{document}

\pagestyle{myheadings}
\markboth{M. Bosy, M. Montardini, G. Sangalli and M. Tani}{}

\title {A domain decomposition method for Isogeometric  multi-patch
  problems with inexact local solvers\thanks{Version of   \today}}

\author{M. Bosy\thanks{Universit\`a di Pavia, Dipartimento di Matematica ``F. Casorati'', 
Via A. Ferrata 1, 27100 Pavia, Italy.} \and M. Montardini $^{\dag}$  \and
  G. Sangalli$^{\dag}$\thanks{IMATI-CNR ``Enrico Magenes'',  Pavia, Italy. \vskip 1mm \noindent Emails: 
{ michal.bosy@unipv.it, monica.montardini01@universitadipavia.it,   giancarlo.sangalli@unipv.it, mattia.tani@imati.cnr.it}}
\and  M. Tani $^{\ddag}$}

%
\maketitle
  
\begin{abstract}
In Isogeometric Analysis, the computational domain is  often
described as  multi-patch, where each patch is given
by a tensor product spline/NURBS parametrization.
In this work we propose  a FETI-like solver where local inexact
solvers exploit the
tensor product structure at the patch level.  To this purpose,  we
extend to the isogeometric framework  the so-called All-Floating
variant of  FETI, that   allows us  to use  the Fast Diagonalization method at
the patch level. We construct then a preconditioner for the whole
system and prove its robustness with respect to the  local mesh-size
$h$ and patch-size $H$ (i.e., we have scalability). Our
numerical tests confirm the theory and also show a favourable
dependence of the computational cost of the method from  the
spline degree $p$. 
 \vskip 1mm
 \noindent
 \textbf{Keywords:}  
  Isogeometric Analysis, domain decomposition, FETI, IETI, preconditioners, Fast Diagonalization.
 \end{abstract}  

\section{Introduction}

Isogeometric Analysis (IgA) was introduced in the seminal paper
\cite{hughes2005isogeometric} as  an extension of   finite
element analysis. The key  idea is to use  the same basis functions that describe the
computational domain, typically B-splines, NURBS or extensions,  also to represent
the unknown solution of the partial differential equations.


In this work, we are concerned with the numerical solution of large
isogeometric compressible  linear elasticity problems in multi-patch
domains, that is,   domains  defined as the union of
several patches, each described by a different tensor product spline/NURBS
parametrization (see \cite{cottrell2009isogeometric}).  Knowing the
advantages that come from  the use of high-degree and 
high-continuity spline approximation (see
for example 
\cite{evans2009n,da2011some,sande2019sharp,takacs2016approximation,bressan2018approximation,MR3811621}),
we are particularly interested in this case. It is also known that the development of linear solvers, both direct and iterative, for
high-degree splines is a challenging task, see \cite{MR2880524,MR3035485}.  

Our starting point is  \cite{MR3572372}, where it has been shown
the potential of the Fast Diagonalization (FD) method to construct
fast solvers for  elliptic isogeometric problems.   The FD method is a
direct solver introduced in \cite{Lynch1964}, that 
 can  be applied to problems with a Sylvester-like structure. In
 general, elliptic isogeometric problems do not possess the required
 Sylvester-like structure, even on a single patch, unless the patch
 parametrization is trivial. However,  \cite{MR3572372}  constructs
 efficient preconditioners (that is, inexact solvers) with the required structure on a single patch. Similarly, here we use FD as
 an inexact and fast solver for problems at the patch level. 

Our approach is based on the   Finite Element Tearing and
Interconnecting   (FETI) idea, that, after its appearance in 
~\cite{MR3618550}, has been widely developed and adopted in finite
element solvers, see \cite{MR2104179}.  IETI, the isogeometric  version of FETI, has been
introduced in \cite{kleiss2012ieti}.   In particular, we develop  in
this paper an All-Floating IETI (in short AF-IETI) method,   the
isogeometric version of the All-Floating FETI introduced
in~\cite{of2009all}, which is in turn  similar to
the so-called total-FETI of~\cite{MR2282408}. With  this variant
of FETI,  both the  global continuity of the solution  and the Dirichlet boundary conditions are
weakly imposed by  Lagrange multipliers. 
The choice of the AF-IETI  formulation is crucial for us since it yields the
Sylvester-like structure that we need to use FD as inexact local solver. 
To allow inexact solvers,  a   saddle point formulation as in
\cite{MR1787293} is also required.

  To show the potential of the proposed inexact AF-IETI, we compare
  numerically its performance to AF-IETI with  the exact local
  solvers. Our results indicate that the inexact approach requires
  orders of magnitude less time than the exact one. Moreover, and
  perhaps even more important, numerical tests indicate that the
  performance of the preconditioner does not deteriorate as
  the degree $p$ is increased. 

Domain decomposition methods represent an active research area  in
isogeometric analysis.  We recall the overlapping Schwarz methods
studied in \cite{bercovier2015overlapping,da2012overlapping},  the  BDDC methods with
related preconditioners studied from \cite{beirao2013bddc}, the
dual-primal approach introduced in  \cite{kleiss2012ieti} and further
studied in \cite{MR3610089,pavarino2016isogeometric}. Domain
decomposition approaches with inexact local solvers have been studied
in    \cite{hofer2017inexact,takacs2018robust}. The case of trimmed domains has ben
recently addressed in \cite{DEPRENTER2019604}

The paper is organized as follows. In Section~\ref{sec:preliminaries} we present  the basics of multi-patch based IgA and in Section~\ref{sec:equations} we introduce the model problem as well as its discrete formulation.    
The AF-IETI method is described in Section~\ref{sec:DB_FETI}, while exact and inexact local solvers  are introduced and analyzed in Section \ref{sec:local-solvers}.  Numerical results are reported in Section \ref{sec:numerics} and, finally, Section \ref{sec:con} contains some conclusions and future directions of research.

 
\section{Preliminaries  } \label{sec:preliminaries}
   \subsection{B-splines}
   \label{sec:splines}
Given two integers $m, p > 0$,   we introduce a \emph{knot vector} $\Xi:=\{0=\xi_1\leq \dotsc\leq \xi_{m+p+1}=1\}$ in the interval $[0,1]$, where $m$ and $p$ are, respectively, the number of basis functions that will be built from the knot vector and their polynomial degree.
We consider \emph{open} knot vectors, i.e. we set $\xi_1=\dotsc=\xi_{p+1}=0$ and $\xi_{m+1}=\dotsc=\xi_{m+p+1}=1$. 
Following Cox-de Boor recursion formulas \cite{MR1900298},  univariate B-splines are piecewise polynomials   defined for $i=1,\dots,m$ as follows:\par
for $p = 0$ 
\begin{eqnarray*}
\widehat{b}_{ i,0}(\eta)= \begin{cases}1 & { \textrm{if }} \xi_{i}\leq \eta<\xi_{i+1},\\
0 & \textrm{otherwise},
\end{cases}
\end{eqnarray*}
\indent for $p \geq 1$
\begin{eqnarray*}
\widehat{b}_{ i,p}(\eta)=\! \begin{cases}\dfrac{\eta-\xi_{i}}{\xi_{i+p}-\xi_{i}}\widehat{b}_{ i, p-1}(\eta) +\dfrac{\xi_{i+p+1}-\eta}{\xi_{i+p+1}-\xi_{i+1}}\widehat{b}_{ i+1, p-1}(\eta) & { \textrm{if }} \xi_{i}\leq \eta<\xi_{i+p+1}, \\[8pt]
0 & \textrm{otherwise,}
\end{cases}
\end{eqnarray*}
 where we adopted the convention $\frac{0}{0}=0$. The multiplicity of the internal knots influences the smoothness of the B-splines (see \cite{cottrell2009isogeometric}).
The corresponding \emph{ univariate spline space} is defined as
\begin{equation*}
\widehat{\mathcal{S}}_p: = \mathrm{span}\{\widehat{b}_{i,p}\}_{i = 1}^m. 
\end{equation*}
 We also introduce the mesh-size $h:=\max\{\xi_{i+1}-\xi_i \ | \ i=1,\dots, m+p\}$.
  We remark that the first and last basis function are nodal at the endpoint of the unit interval, i.e. $ \widehat{b}_{1,p}\left(0\right) = \widehat{b}_{m,p}\left(1\right) = 1$.


We consider multivariate B-splines as tensor products of 
univariate ones. In particular, for $d$-dimensional problems, given $2d$ integers $m_l,p_l>0$ for $l=1,\dots,d,$ we introduce $d$ univariate knot vectors $\Xi_l:=\{\xi_{l,1},\dots,\xi_{l,m_{l}+p_l+1}\}$   for
$l=1 ,.., d$ and the corresponding mesh-sizes denoted with $h_l$  for
$l=1 ,.., d$.   For simplicity we suppose that the degree of the B-splines is the same in all parametric direction, i.e. we set  $p_1=\dots=p_d=:p$, but the general case is similar.   For a multi-index $\vect{i} = (i_1, \dotsc, i_d)$, we define the multivariate B-spline as follows
\begin{equation*}
\widehat{B}_{\vect{i},  {p}}(\vect{\eta}) : = \widehat{b}_{i_1, p }(\eta_1) \dotsc \widehat{b}_{i_d, p }(\eta_d)
\end{equation*}
 where $\vect{\eta} = (\eta_1, \dotsc, \eta_d)$. Hence, the \emph{multivariate spline space} on the parametric domain $\widehat{\Omega}:=\left[0,1\right]^d$ is defined as
\begin{equation*}
 \boldsymbol{\widehat{\mathcal{S}}}_{{p}}:= \underbrace{ \widehat{\mathcal{S}}_{p} \times \dots \times \widehat{\mathcal{S}}_{p}}_{d}=\mathrm{span}\{\widehat{B}_{\vect{i}, {p}} \ | \ i_l = 1,\dotsc, m_{l}; l=1,\dotsc, d \}.
 \end{equation*}
 We also introduce the global mesh-size, defined as $h:=\max\{h_l\ | \ l=1,\dots, d\}$.
\subsection{Multi-patch domains}
\label{sec:multipatch}
  Let $\Omega\subset\mathbb{R}^d$    be  the union of $\numpatch$     isogeometric patches, i.e $\overline{\Omega} = \bigcup_{k = 1}^{\numpatch} \overline{\Omega}^{(k)}$ and $\Omega^{(j)} \cap \Omega^{(k)} = \emptyset$ for $j \neq k$.
  In the present approach, these patches coincide with the non-overlapping subdomains whose prescription is the starting point of every FETI method.   Let $H^{(k)}$ be the diameter of $\Omega^{(k)}$ for $k=1,\dots,\numpatch$ and   $H:=\max\{H^{(k)} \ | \  k=1,\dots,\numpatch\}$.   
  For each   patch, given $d+1$ integers $m_1^{(k)},\dots,m_d^{(k)},p^{(k)}>0$, we introduce open knot vectors $\Xi_l^{(k)}:=\{0=\xi^{(k)}_1\leq \dotsc\leq \xi^{(k)}_{m^{(k)}_l+p^{(k)}+1}=1\}$ for $l=1,\dots,d$ and the local mesh-size $h^{(k)}$.  For simplicity, we suppose that the degree of the B-splines is the same in each patch, i.e. we set $p^{(1)}=\dots=p^{(\numpatch)}=:p$, even if the general case is similar. Let also $h:=\max\{h^{(k)}\ | \ k=1,\dots, \numpatch\}$.
We will make the following assumption on the quasi-regularity of the meshes and on the diameter of the patches.
 \begin{ass}
\label{ass:quasi-uniformity}
 There exists   $\alpha\in(0,1] $,   independent
of $h^{(k)}$ and $H^{(k)}$ for $k=1,\dots,\numpatch$, such that each non-empty knot span $( \xi^{(k)}_{l,i} ,
\xi^{(k)}_{l,i+1})$ fulfils $ \alpha h^{(k)}  \leq \xi_{l,i+1}^{(k)} - \xi_{l,i}^{(k)} \leq
h^{(k)}$  for $i=1,\dots m^{(k)}_l+p$, for $l=1,\dots  d$ and for $k=1,\dots,\numpatch$   and  each   $H^{(k)}$ fulfils $ \alpha H  \leq H^{(k)} \leq
H$ for $k=1,\dots,\numpatch$.
\end{ass}  
  
  We denote the multivariate spline-space associated as $\boldsymbol{\widehat{\mathcal{S}}}^{(k)}_{ {p}}:=\mathrm{span}\{ \widehat{B}^{(k)}_{\vect{i}, {p}} \ | \ i_l = 1,\dotsc, m^{(k)}_{l}; l=1,\dotsc, d  \}$.
By introducing a colexicographical reordering of the basis functions, we have
\begin{equation*} 
\boldsymbol{\widehat{\mathcal{S}}}^{(k)}_{p}:=\mathrm{span}\left\{ \widehat{B}^{(k)}_{ {i},p} \ | \ i = 1,\dots,n^{(k)} \right\}, 
\end{equation*}
  where $n^{(k)}:=\mathrm{dim}(\boldsymbol{\widehat{\mathcal{S}}}^{(k)}_{p})$.  
Each $\Omega^{(k)}$ is represented by a  non-singular  spline parametrization $\mathcal{F}^{(k)}\in \left[ \vect{\widehat{\mathcal{S}}}^{(k)}_{p}\right]^d$,   i.e. $\Omega^{(k)} = \mathcal{F}^{(k)}(\widehat{\Omega})$ and the Jacobian matrix $J_{\mathcal{F}^{(k)}}$ is invertible everywhere.  
According to the isoparametric concept, the \emph{isogeometric space} on each patch is defined as 
\begin{equation}
V^{(k)}_{h}:= \mathrm{span}\left\{{B}_{ {i}, p}^{(k)}:=\widehat{B}^{(k)}_{ {i}, p} \circ \left(\mathcal{F}^{(k)}\right)^{-1} \  \bigg| \ i = 1,\dotsc, n^{(k)}  \right\},
\label{eq:univDBFETI-space}
\end{equation} 
while the \emph{isogeometric space} over $\Omega$ is defined as
\begin{equation*}
V_h:=\Pi_{k=1}^{\numpatch} V_h^{(k)} = \left\{ v\in L^2(\Omega)\ \bigg| \ v_{|_{\Omega^{(k)}}}\in V_h^{(k)}, \ k=1,\dots,\numpatch\right\}.
\end{equation*} 
Note that functions in $V_h$ are not necessarily continuous.

Through this paper, we consider both \emph{conforming} and \emph{non-conforming} meshes at the patch interfaces. 
In the first case, for all $j$ and $k$ s.t. $\partial\Omega^{(k)}\cap\partial\Omega^{(j)}\neq\emptyset$ and this intersection is not a point, $\Omega^{(k)}$ and $\Omega^{(j)}$ are fully matching (see \cite{hughes2005isogeometric}).
In the second case, we allow that on a given interface the knot vector of one patch can be more refined than the knot vector of an   adjacent patch   (see Figure~\ref{fig:poisson_3d} in Section \ref{sec:numerics}). With this assumption, we have that splines of the coarser side can be written as a linear combination  of splines of the finer one.


\section{Model problem and its discretization}
\label{sec:equations}

Let $\Omega\subset\mathbb{R}^d$ be a computational domain  described by a multi-patch spline parametrization, as in Section \ref{sec:multipatch}, and let $\partial \Omega$ denote its  boundary.  Suppose that  $\partial \Omega=\partial\Omega_D\cup\partial\Omega_N$ with $\partial\Omega_D\cap \partial\Omega_N=\emptyset$, where $\partial\Omega_D$ has positive measure.   
Let $\vect{f} \in  [{L}^2(\Omega)]^d $,  $ \ \vect{g} \in   [{L}^2(\partial \Omega_N)]^d$ and $H^1_D(\Omega):=\{v \in H^1(\Omega)\ \text{s.t.} \ v=0 \text{ on } \partial\Omega_D\}$.  Then, the  variational formulation of the compressible linear elasticity   problem    we consider reads:
 \begin{center}
\textit{Find  $\vect{u}\in [H^1_D(\Omega)]^d$    s.t. for all   $\vect{v}\in [H^1_D(\Omega)]^d$ }\end{center}
\begin{equation*}
a(\vect{u},\vect{v})= \langle \vect{F}, \vect{v} \rangle ,
\end{equation*}
where we define
\begin{align}
\label{eq:bil_lin}
a(\vect{u},\vect{v}) :=  2 \mu \int_{\Omega} \varepsilon(\vect{u}) : \varepsilon(\vect{v}) \dx + \lambda \int_{\Omega} \left( \nabla \cdot \vect{u} \right) \left(\nabla \cdot \vect{v}\right) \dx    , \quad
\langle \vect{F}, \vect{v} \rangle :=  \int_{\Omega} \vect{f} \cdot \vect{v} \dx + \int_{\partial \Omega_N} \vect{g}\cdot \boldsymbol{v}    \ds 
\end{align}
and where we used the notation     $ {\varepsilon}(\boldsymbol{v}) := \frac{1}{2} \left(\nabla \boldsymbol{v}+ (\nabla \boldsymbol{v})^T\right)$ for the symmetric gradient,   while   $\lambda$ and $\mu$ denote the material Lam\'{e} coefficients.

The corresponding discrete problem we want to solve is then
\begin{center}
\textit{Find $\vect{u}_h \in [ V_{h}\cap  H^1_D(\Omega) ]^d$ s.t. for all $\vect{v}_h \in [ V_{h}\cap  H^1_D(\Omega)]^d$}                                                                                                                                                                        \end{center}
\begin{equation}
\label{eq:discrete_elasticity} a(\vect{u}_h, \vect{v}_h)  = \langle \vect{F}, \vect{v}_h \rangle.
\end{equation}


\section{All-Floating IETI method} 
\label{sec:DB_FETI}

In this section, we present  the All-Floating IETI (AF-IETI) method, an  extension to IgA  of the AF-FETI method, introduced in \cite{pechstein2012finite}, (see also \cite{pechPHD}).  In this formulation, the FETI interface includes   the whole boundary $\partial \Omega$ without distinction between Dirichlet and Neumann boundary.  

Let $\Omega$ be the computational domain described as the  union of $\numpatch$  isogeometric patches $\Omega^{(k)}$ for $k = 1,\ldots,\numpatch$,  as detailed  in  Section \ref{sec:multipatch}.  
%
We  work with the  computational spaces 
\begin{align}
\label{eq:t-feti_space}
\mathcal{V}_h^{(k)}  := 
\left[V_h^{(k)}\right]^d \quad \text{and} \quad\mathcal{V}_h := \Pi_{k=1}^{\numpatch} \mathcal{V}_h^{(k)},
\end{align}
where $V_h^{(k)}$ is defined  in \eqref{eq:univDBFETI-space}.
  Note that the space   $\mathcal{V}_h$  above does not include any boundary condition or continuity condition   across the patch boundaries,   unlike  the space $[V_h\cap H^1_D(\Omega)]^d$ used in problem \eqref{eq:discrete_elasticity}.  
Let 
\begin{equation*}
\mathcal{N}_{dof}^{(k)} :=\mathrm{dim}\left(\mathcal{V}_h^{(k)}\right)=\left[ n^{(k)}\right]^d  \quad \text{ and }\quad \mathcal{N}_{dof}:=\mathrm{dim}\left(\mathcal{V}_h\right) = \sum_{k=1}^{\numpatch}\mathcal{N}_{dof}^{(k)}
\end{equation*}
denote  the dimension of the $k-$th local space  for  $k=1,\ldots, \numpatch$, and the dimension of the whole space, respectively. Each function $ \vect{v}^{(k)}_h\in\mathcal{V}_h^{(k)}$ is uniquely represented by  a coordinate  vector $\mathbf{v}^{(k)}\in\mathbb{R}^{\mathcal{N}^{(k)}_{dof}}$. Similarly, each function $\vect{v}_h\in\mathcal{V}_h $ is associated to a coordinate vector $\mathbf{v}\in\mathbb{R}^{\mathcal{N}_{dof}}$ of the form 
  $$
  \mathbf{v}:=\begin{bmatrix}
  \mathbf{v}^{(1)}\\
  \vdots\\
  \mathbf{v}^{(\numpatch)}
  \end{bmatrix} .
  $$ 
 
We  assemble the local stiffness matrices $\mathbf{A}^{(k)}$ and local right-hand-side vectors $\mathbf{f}^{(k)}$ by integrating the appropriate expressions over individual patches $\Omega^{(k)}$ for $k=1,\dots,\numpatch$.
The matrices $ \mathbf{A}^{(k)}$ for $k=1,\dots,\numpatch $ are symmetric, positive semidefinite and singular. We then define the block-diagonal matrix $\mathbf{A}$ and the load vector $\mathbf{f}$ as 
  \begin{align}  
\label{eq:block_A}
 \mathbf{A} := 
 	\begin{bmatrix}
 		 \mathbf{A}^{(1)} & &    \\
 		 & \ddots & \\
 		  & & \mathbf{A}^{(\numpatch)}
 	\end{bmatrix}	\in \mathbb{R}^{\mathcal{N}_{dof} \times \mathcal{N}_{dof}}\quad  \text{and}\quad 
 \mathbf{f} := 
 \begin{bmatrix}
 		 \mathbf{f}^{(1)} \\
 		 \vdots \\
 		 \mathbf{f}^{(\numpatch)}
 	\end{bmatrix}  \in \mathbb{R}^{\mathcal{N}_{dof}} 
 \end{align}  
where each local matrix $ \mathbf{A}^{(k)}$ has a natural block structure, that follows from the vectorial nature of the space $\mathcal{V}_h^{(k)}$ in \eqref{eq:t-feti_space} and of the bilinear form $a(\cdot, \cdot)$ in \eqref{eq:bil_lin}:
 \begin{equation}
\label{eq:le_local_matrix}
 \mathbf{A}^{(k)} := 
 	\begin{bmatrix}
 		 \mathbf{A}^{(k)}_{1,1} &\dots & \mathbf{A}^{(k)}_{1,d} \\
 		 \vdots & \ddots & \vdots \\
 		\mathbf{A}^{(k)}_{d,1} & \dots & \mathbf{A}^{(k)}_{d,d}
 	\end{bmatrix}.
 \end{equation}
  The interface $\Gamma$ is   defined as the union of the local interfaces, that in our method are the whole local boundaries $\partial \Omega^{(k)}$, as 
\begin{equation*}
\Gamma :=  \bigcup_{k = 1}^{\numpatch}  \partial \Omega^{(k)}.
\end{equation*}

Note that, thanks to the use of open knot vectors, we can identify the local degrees-of-freedom associated to basis functions   that have non-zero support on the local interface $\partial\Omega^{(k)}$ and the remaining degrees-of-freedom,  that are associated to the interior functions.  
Thus, we also  assume a partition of each local matrix $\mathbf{A}^{(k)}$ as
\begin{equation}
\label{eq:A_partition}
\mathbf{A}^{(k)} := 
 	\begin{bmatrix}
 		 \mathbf{A}^{(k)}_{II} &   \mathbf{A}^{(k)}_{I \Gamma } \\
 		\mathbf{A}^{(k)}_{\Gamma I} &   \mathbf{A}^{(k)}_{\Gamma  \Gamma }
 	\end{bmatrix},
\end{equation}
where the subscript $\Gamma $ refers to the interface degrees-of-freedom that belongs to the $k$-th patch, while $I$ indicates the remaining local interior degrees-of-freedom. 
Let also $\mathcal{N}_{\Gamma}^{(k)}$ be the dimension of $\mathbf{A}^{(k)}_{\Gamma  \Gamma }$ for $k=1,\dots,\numpatch$, i.e. $\mathbf{A}^{(k)}_{\Gamma  \Gamma }\in\mathbb{R}^{\mathcal{N}_{\Gamma}^{(k)}\times\mathcal{N}_{\Gamma}^{(k)}}$ for $k=1,\dots,\numpatch$. 

 As the functions in   $\mathcal{V}_h$ are in general discontinuous across the patch boundaries and do not have prescribed Dirichlet boundary conditions, we need to impose these constraints separately. To this end, we introduce 
 the sparse matrix  $\mathbf{B}\in\mathbb{R}^{\mathcal{N}_c\times \mathcal{N}_{dof}}$, where   $\mathcal{N}_c$ denotes the number of  conditions to impose.
In particular,  $\mathbf{B}\mathbf{v}=\mathbf{0}$ when the  function $\vect{v}_h\in\mathcal{V}_h$ corresponding to the vector $\mathbf{v}\in\mathbb{R}^{\mathcal{N}_{dof}}$ satisfies   homogeneous Dirichlet boundary conditions and it is continuous across the patches. 
If Dirichlet boundary conditions are non-homogeneous, then also the right-hand side has to be modified (see \cite{MR2282408} for further details).  Without loss of generality, we assume that $\mathrm{ker}(\mathbf{B}^T)=\vect{0}$, i.e. there are no redundant constraints and $\mathrm{range}(\mathbf{B})=\mathbb{R}^{\mathcal{N}_c}$.

By introducing a vector of Lagrange multipliers $\vect{\lambda}\in \mathbb{R}^{\mathcal{N}_c}$,  problem 
\eqref{eq:discrete_elasticity} can be reformulated as follows:

 \begin{center}
 \textit{Find $(\mathbf{u}, \vect{\lambda}) \in  \mathbb{R}^{\mathcal{N}_{dof}}\times \mathbb{R}^{\mathcal{N}_c}$ such that}
 \begin{equation}
 \label{eq:db_feti_system}
 \begin{bmatrix}
 		 \mathbf{A} & \mathbf{B}^T \\
 		 \mathbf{B} & \mathbf{0}
 	\end{bmatrix}
	\begin{bmatrix}
 		 \mathbf{u} \\
 		 \vect{\lambda}
 	\end{bmatrix}
	=
	\begin{bmatrix}
 		\mathbf{f}\\
 		\mathbf{0 }
 	\end{bmatrix}.
 \end{equation}
 \end{center} 
  The difficulty in preconditioning \eqref{eq:db_feti_system} is that $\mathbf{A}$ is singular.
Following \cite{MR1787293}, we  introduce  the matrix
 
%
%
%

\begin{equation*}
\mathbf{R} :=\begin{bmatrix}
 \mathbf{R}^{(1)}& &   \\
 		 & \ddots &  \\
 		  & &\mathbf{R}^{(\numpatch)} 
\end{bmatrix},
\end{equation*}
where the matrices $\mathbf{R}^{(k)}$ are defined such that $\mathrm{range}(\mathbf{R}^{(k)})=\mathrm{ker}(\mathbf{A}^{(k)})$ for $k = 1,\ldots\numpatch$.
Thus, we also have that  $\mathrm{range}(\mathbf{R})=\mathrm{ker}( \mathbf{A})$.	
In our case, $\mathbf{R}^{(k)}$ represents 
the space of rigid body motions on the patch $\Omega^{(k)}$. 
For three-dimensional problems, the case addressed in the numerical experiments of this paper,  this space is spanned by three translations and three   infinitesimal  rotations 
\begin{align*}
\vect{r}_1:=\begin{bmatrix}
1\\0\\0
\end{bmatrix}
&,&
\vect{r}_2:=\begin{bmatrix}
0\\1\\0
\end{bmatrix},\quad
\vect{r}_3:=\begin{bmatrix}
0\\0\\1
\end{bmatrix},\quad
\vect{r}_4:=\begin{bmatrix}
0\\ \vect{x}_3\\-\vect{x}_2
\end{bmatrix}
&,&
\vect{r}_5:=\begin{bmatrix}
\vect{x}_3 \\0\\ -\vect{x}_1
\end{bmatrix},\quad
\vect{r}_6:=\begin{bmatrix}
\vect{x}_2\\ -\vect{x}_1\\0
\end{bmatrix}.
\end{align*}
 Note that $\mathrm{ker}(\mathbf{A})\cap\mathrm{ker}(\mathbf{B})=\vect{0}$, hence system \eqref{eq:db_feti_system} is uniquely solvable.

Multiplying the first block of equations of~\eqref{eq:db_feti_system} by $\mathbf{R}^T$ and using that $\mathbf{R}^T \mathbf{A} = \mathbf{0}$, we get
\begin{equation*}
\mathbf{R}^T \mathbf{B}^T \vect{\lambda} = \mathbf{R}^T \mathbf{f}.
\end{equation*}
Let  $\mathbf{G} := \mathbf{B} \mathbf{R}$ and  decompose $\vect{\lambda} = \vect{\lambda}_0 + \vect{\chi}$ such that $\vect{\lambda}_0$ satisfies
\begin{equation*}
\mathbf{G}^T \vect{\lambda}_0 = \mathbf{R}^T \mathbf{f},
\end{equation*}
and $\vect{\chi} \in \mbox{ker}(\mathbf{G}^T)$.
We introduce the orthogonal projection onto $\mbox{ker}(\mathbf{G}^T)$
\begin{equation}
\label{eq:proj_chi}
\mathcal{P}_{ {\chi}} := \mathbf{I}_{\mathcal{N}_c} - \mathbf{G} \left(\mathbf{G}^T \mathbf{G}\right)^{-1} \mathbf{G}^T \in \mathbb{R}^{ \mathcal{N}_c\times \mathcal{N}_c}, 
\end{equation}
where $\mathbf{I}_{n}$ denotes the identity matrix of dimension $n\times n$.
Finally, the problem we want to solve is the following:
 \begin{center}
 \textit{Find $(\mathbf{w}, \vect{\chi}) \in \mbox{range}(\mathbf{A}) \times \mbox{ker}(\mathbf{G}^T)$ such that} 
 \begin{equation}
 \label{eq:db_feti_alternative_system}
 \mathcal{A} 
 \begin{bmatrix}
 		 \mathbf{w} \\
 		 \vect{\chi}
 	\end{bmatrix} :=
 \begin{bmatrix}
 		 \mathbf{A} & (\mathcal{P}_{ {\chi}} \mathbf{B})^T \\
 		 \mathcal{P}_{ {\chi}} \mathbf{B} & \mathbf{0}
 	\end{bmatrix}
	\begin{bmatrix}
 		 \mathbf{w} \\
 		 \vect{\chi}
 	\end{bmatrix}
	=
	\begin{bmatrix}
 		 \mathbf{f}- \mathbf{B}^T \vect{\lambda}_0 \\
 		 \mathbf{0}
 	\end{bmatrix} .
 \end{equation}
 \end{center}
We observe that $\mathcal{A}$ is an isomorphism of $\mbox{range}( \mathbf{A}) \times \mbox{ker}(\mathbf{G}^T)$ into itself. Indeed, $\mathbf{A}$ is an isomorphism on $\mbox{range}(\mathbf{A})$, and $(\mathcal{P}_{\chi} \mathbf{B})^T \mathbf{v} = \mathbf{B}^T \mathcal{P}_{ {\chi}} \mathbf{v} = \mathbf{B}^T \mathbf{v}$ for every $\mathbf{v} \in \mbox{ker}(\mathbf{G}^T)$. Thus, system~\eqref{eq:db_feti_alternative_system} is uniquely solvable. Note that,   since $\mathcal{P}_{ {\chi}} \vect{\chi} = \vect{\chi}$ and $\mathcal{P}_{ {\chi}} = \mathcal{P}_{ {\chi}}^T$, the first equation of~\eqref{eq:db_feti_alternative_system} is equivalent to  the first equation of ~\eqref{eq:db_feti_system}. However, from the second equation of ~\eqref{eq:db_feti_alternative_system} we see that the solution $\mathbf{w}$ satisfies $\mathcal{P}_{ {\chi}} \mathbf{B w} = \mathbf{0}  $ and not $\mathbf{B w} = \mathbf{0} $.

Thus, given    the  solution $\begin{bmatrix}
\mathbf{w} \\ \boldsymbol{\chi} 
\end{bmatrix}$  of~\eqref{eq:db_feti_alternative_system},   by a straightforword calculation, we can see that the solution $\begin{bmatrix}
\mathbf{u}\\ \boldsymbol{\lambda}
\end{bmatrix}$   of~\eqref{eq:db_feti_system}   is
\begin{align*}
\mathbf{u}:= \left( \mathbf{I}_{\mathcal{N}_{dof}} - \mathbf{R} \left(\mathbf{G}^T \mathbf{G}\right)^{-1} \mathbf{G}^T \mathbf{B} \right) \mathbf{w}
 &, &
 \vect{\lambda} := \vect{\lambda}_0 + \vect{\chi}.
\end{align*}


  Our   solver for the linear system \eqref{eq:db_feti_alternative_system} is  MINRES   preconditioned by a block-diagonal matrix, whose construction and required properties are discussed below.

Using the splitting of the local matrices into boundary and internal degrees-of-freedom \eqref{eq:A_partition}, we   introduce the local Schur complement 
  matrices $\mathbf{S}^{(k)}\in\mathbb{R}^{\mathcal{N}_{\Gamma}^{(k)}\times \mathcal{N}_{\Gamma}^{(k)}}$ for $k=1,\dots,\numpatch$, defined as
 \begin{equation*}
\mathbf{S}^{(k)} :=  \mathbf{{A}}^{(k)}_{\Gamma \Gamma }- \mathbf{{A}}^{(k)} _{\Gamma I}  \left( \mathbf{{A}}^{(k)} _{I I}\right)^{-1} \mathbf{{A}}^{(k)}_{I \Gamma } \quad \text{for } k = 1, \cdots, \numpatch.
\end{equation*} 
  Let also $\mathbf{M}^{(k)}  \in \mathbb{R}^{\mathcal{N}_{dof}^{(k)} \times \mathcal{N}_{dof}^{(k)} }$ for $k=1,\dots,\numpatch$ be the local mass matrices, that are block-diagonal matrices, split component-wise as
 \begin{equation}
 \label{eq:le_local_MS}
 \mathbf{M}^{(k)} := 
 	\begin{bmatrix}
 		 \mathbf{M}^{(k)}_1 & &   \\
 		 & \ddots & \\
 		  & & \mathbf{M}^{(k)}_d
 	\end{bmatrix}.
 \end{equation} 
 We then can build the global Schur complement and mass matrices, as 
$$
\mathbf{S}:=\begin{bmatrix}
\mathbf{S}^{(1)} & &  \\
& \ddots & \\
  & & \mathbf{S}^{(\numpatch)}
\end{bmatrix} \in \mathbb{R}^{\mathcal{N}_{\Gamma}\times\mathcal{N}_{\Gamma}}\quad  \quad \mathbf{M}:=\begin{bmatrix}
\mathbf{M}^{(1)} & &  \\
& \ddots & \\
  & & \mathbf{M}^{(\numpatch)}
\end{bmatrix} \in \mathbb{R}^{\mathcal{N}_{dof}\times\mathcal{N}_{dof}},
$$ where $\mathcal{N}_{\Gamma}:=\sum_{k=1}^{\mathcal{N}_{patch}}\mathcal{N}_{\Gamma}^{(k)}$ represents the number of degrees-of-freedom
associated to the interface $\Gamma$. Note that  $\mathbf{S}$ is the
Schur complement   of $\mathbf{A}$ that is obtained by eliminating the
interior degrees-of-freedom of each patch.   We also introduce the scaling
diagonal matrix
$$\mathbf{D}_H := \begin{bmatrix}  (H^{(1)})^{-2}\ \mathbf{I}_{\mathcal{N}_{dof}^{(1)}} & & \\
 & \ddots & \\
 & &  (H^{(\numpatch)})^{-2}\ \mathbf{I}_{\mathcal{N}_{dof}^{(\numpatch)}} \end{bmatrix},$$ 
and the matrix $\mathbf{B}_{\Gamma}\in\mathbb{R}^{\mathcal{N}_{c}\times
  \mathcal{N}_{\Gamma}}$ as the restriction of the constraint matrix
$\mathbf{B}$ to the interface degrees-of-freedom. 
Finally, following~\cite{MR1787293},   we define  the block-diagonal  preconditioner 
\begin{equation}
\label{eq:all_preconditioner}
 \mathcal{B}^{-1} :=
 \begin{bmatrix}
 		\mathcal{P}_{{u}} \mathbf{P}_{A}^{-1}   \mathcal{P}_{{u}}^T &    \\
 		  & \mathcal{P}_{{\chi}}   \mathbf{P}_{{S}}    \mathcal{P}_{{\chi}}^T
 	\end{bmatrix}, 
\end{equation}
for the system~\eqref{eq:db_feti_alternative_system}, where
$$
\mathcal{P}_{{u}} :=  \mathbf{I}_{\mathcal{N}_{dof}} - \mathbf{R} \left(\mathbf{R}^T \mathbf{R}\right)^{-1} \mathbf{R}^T 
$$
is the orthogonal projection onto $\mbox{range}(\mathbf{A})$ and
$\mathcal{P}_{\vect{\chi}}$ is the orthogonal projector onto
$\mbox{ker}(\mathbf{G}^T)$ defined in \eqref{eq:proj_chi}. The
following general result provides the bound on the condition number for
the resulting preconditioned system.

\begin{theorem} 
\label{tw:cond_num}
   Under the assumptions above, given two symmetric  matrices $\mathbf{P}_{A} \in
   \mathbb{R}^{\mathcal{N}_{dof} \times \mathcal{N}_{dof}}$ and
   $\mathbf{P}_{S} \in \mathbb{R}^{\mathcal{N}_{\Gamma} \times
     \mathcal{N}_{\Gamma}}$, if there exist  positive
   constants  $a_0,a_1,s_0$ and $s_1$ independent of $h$ and $H$ such that 
\begin{equation} 
a_0 \, \mathbf{u}^T(\mathbf{A} + \mathbf{D}_H \mathbf{M})\mathbf{u} \; \leq   \; \mathbf{u}^T\mathbf{P}_{{A}}\mathbf{u}\;   \leq \; a_1 \, \mathbf{u}^T(\mathbf{A} + \mathbf{D}_H \mathbf{M})\mathbf{u}\quad   \forall \  \mathbf{u}\in \mathrm{range}(\mathbf{A}), \label{eq:bound_A} 
\end{equation}
\begin{equation}\\
s_0 \, \vect{\lambda}^T\mathbf{B}_{\Gamma}\mathbf{S}\mathbf{B}_{\Gamma}^T \vect{\lambda} \; \leq   \; \vect{\lambda}^T \mathbf{P}_{S} \vect{\lambda} \;   \leq \; s_1 \, \vect{\lambda}^T\mathbf{B}_{\Gamma}\mathbf{S}\mathbf{B}_{\Gamma}^T \vect{\lambda} \quad   \forall \  \vect{\lambda}\in \mathrm{ker}(\mathbf{G}^T),\label{eq:bound_S}
\end{equation}
 then the condition number of the system
 ~\eqref{eq:db_feti_alternative_system} preconditioned by
 \eqref{eq:all_preconditioner} fulfils
\begin{equation}
\label{eq:cond_number}
\kappa (\mathcal{B}^{-1} \mathcal{A}) \leq C \left(1+ \log{\frac{H}{h}}\right)^2,
\end{equation}
where $C > 0$ is a constant  independent of $h$ and $H$.
\end{theorem}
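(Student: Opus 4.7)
The overall plan is to reduce the analysis to two essentially independent spectral bounds, by means of a by-now-classical framework for block-diagonal preconditioning of symmetric indefinite saddle-point problems (see e.g.\ the analysis in~\cite{MR1787293} on which \eqref{eq:all_preconditioner} is modelled). As a first step I would introduce the \emph{ideal} block-diagonal preconditioner
\begin{equation*}
\widetilde{\mathcal{B}}^{-1} := \begin{bmatrix} \mathcal{P}_{{u}}\, (\mathbf{A} + \mathbf{D}_H \mathbf{M})^{-1}\, \mathcal{P}_{{u}}^T & \\ & \mathcal{P}_{{\chi}} \, \mathbf{B}_\Gamma \mathbf{S} \mathbf{B}_\Gamma^T \, \mathcal{P}_{{\chi}}^T \end{bmatrix},
\end{equation*}
obtained by replacing $\mathbf{P}_{A}^{-1}$ and $\mathbf{P}_{S}$ in \eqref{eq:all_preconditioner} by the idealized operators appearing on the right-hand sides of \eqref{eq:bound_A}--\eqref{eq:bound_S}. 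Those two assumptions imply that $\mathcal{B}$ and $\widetilde{\mathcal{B}}$ are spectrally equivalent on $\mathrm{range}(\mathbf{A}) \times \mathrm{ker}(\mathbf{G}^T)$ with constants depending only on $a_0, a_1, s_0, s_1$; therefore $\kappa(\mathcal{B}^{-1}\mathcal{A})$ and $\kappa(\widetilde{\mathcal{B}}^{-1}\mathcal{A})$ differ only by a bounded factor, and it suffices to establish \eqref{eq:cond_number} for the ideal preconditioner.

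The standard abstract bound for block-diagonal preconditioning of indefinite saddle-point systems then controls $\kappa(\widetilde{\mathcal{B}}^{-1}\mathcal{A})$ by the product of two quantities: (a) the relative spectral bound between $\mathbf{A}$ and $\mathbf{A} + \mathbf{D}_H \mathbf{M}$ on $\mathrm{range}(\mathbf{A})$; and (b) the relative spectral bound between $\mathbf{B}_\Gamma \mathbf{S} \mathbf{B}_\Gamma^T$ and the (inverse of the) true Schur complement $\mathcal{P}_{\chi} \mathbf{B}\mathbf{A}^\dagger \mathbf{B}^T \mathcal{P}_{\chi}^T$ of $\mathcal{A}$ restricted to $\mathrm{ker}(\mathbf{G}^T)$.

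For (a), the upper inequality $\mathbf{A} \leq \mathbf{A} + \mathbf{D}_H \mathbf{M}$ is immediate; the reverse bound $\mathbf{D}_H \mathbf{M} \lesssim \mathbf{A}$ on $\mathrm{range}(\mathbf{A})$ reduces, patch by patch, to the scaled Korn inequality
\begin{equation*}
(H^{(k)})^{-2} \|\vect{v}^{(k)}\|_{L^2(\Omega^{(k)})}^2 \;\lesssim\; \|\varepsilon(\vect{v}^{(k)})\|_{L^2(\Omega^{(k)})}^2
\end{equation*}
on each $\Omega^{(k)}$, valid for vectorial splines $L^2$-orthogonal to the rigid-body motions. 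A short argument, using Assumption~\ref{ass:quasi-uniformity} to ensure that the Euclidean and $L^2$ projections onto $\mathrm{ker}(\mathbf{A}^{(k)})$ are uniformly comparable, transfers this estimate from the $L^2$-orthogonal complement to the Euclidean one $\mathrm{range}(\mathbf{A})=\mathrm{ker}(\mathbf{R}^T)$, contributing only an $\mathcal{O}(1)$ factor.

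Step (b) is where the main obstacle lies: it is essentially the classical FETI estimate, transported to the IgA setting by following~\cite{kleiss2012ieti,MR3610089,MR2104179}. The two key ingredients are the spectral equivalence of each local Schur complement $\mathbf{S}^{(k)}$ with the discrete $H^{1/2}$-seminorm on $\partial\Omega^{(k)}$ (uniform in $h$ and $H$ thanks to Assumption~\ref{ass:quasi-uniformity}) and the discrete Sobolev inequality
\begin{equation*}
\|v\|_{L^\infty(\partial\Omega^{(k)})}^2 \;\lesssim\; \bigl(1+\log(H/h)\bigr)\, \|v\|_{H^{1/2}(\partial\Omega^{(k)})}^2,
\end{equation*}
from which, via standard scaling/trace arguments applied to the jump operator $\mathbf{B}_\Gamma$ and the projector $\mathcal{P}_{\chi}$, the factor $(1+\log(H/h))^2$ of \eqref{eq:cond_number} emerges. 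A care point specific to the All-Floating variant is that the Dirichlet degrees of freedom must be treated exactly like interior interface ones; the rigid-body augmentation encoded in $\mathcal{P}_{u}$ and $\mathcal{P}_{\chi}$ then guarantees that all operators in the analysis act on the correct subspaces and that the resulting constants remain uniform in $h$ and $H$. Combining (a) and (b) yields \eqref{eq:cond_number}.
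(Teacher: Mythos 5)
Your proposal is correct and takes essentially the same route as the paper: the paper's proof simply invokes Theorem 15 (together with Lemmas 13 and 14) of \cite{MR1787293}, and those results encode precisely the steps you spell out --- reduction to the exact block preconditioner via \eqref{eq:bound_A}--\eqref{eq:bound_S}, the scaled Korn/Poincar\'e equivalence of $\mathbf{A}+\mathbf{D}_H\mathbf{M}$ with $\mathbf{A}$ on $\mathrm{range}(\mathbf{A})$, and the comparison of $\mathbf{B}_{\Gamma}\mathbf{S}\mathbf{B}_{\Gamma}^T$ with the dual operator on $\mathrm{ker}(\mathbf{G}^T)$, which is where the factor $\left(1+\log(H/h)\right)^2$ enters. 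The only detail to adjust is that for $d=3$ the logarithmic ingredient is the usual face/edge lemmas of substructuring theory (whose proofs use inverse inequalities, hence the possible $p$-dependence of $C$ acknowledged in the paper's remark) rather than the $L^\infty$--$H^{1/2}$ bound exactly as you state it.
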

\begin{proof}
  The proof is analogous to the one of \cite[Theorem 15]{MR1787293},
  which follows from \cite[Lemma 13]{MR1787293} and  \cite[Lemma
  14]{MR1787293} that can be straightforwardly extended to our
  framework.  We remark that the proof of \cite[Lemma 13]{MR1787293}
  uses inverse inequalities and, for that reason, the constant $C$ in
  \eqref{eq:cond_number} may depend on the spline degree $p$. 
\end{proof}
 
Note in \eqref{eq:all_preconditioner} the different roles of the matrix $\mathbf{P}_A$, that needs to be inverted, and of the matrix $\mathbf{P}_S$, that is just multiplied.
 We further observe that $\mathcal{B}^{-1}$ is an isomorphism of $\mbox{range}(  \mathbf{A}) \times \mbox{ker}( \mathbf{G}^T)$ into itself. Hence the preconditioned problem
 \begin{center}
 \textit{Find $(\mathbf{w}, \vect{\chi}) \in\mbox{range}(  \mathbf{A}) \times \mbox{ker}( \mathbf{G}^T)$ such that:}
 \begin{equation*}
 \mathcal{B}^{-1} \mathcal{A} 
 \begin{bmatrix}
 		 \mathbf{w} \\
 		 \vect{\chi}
 	\end{bmatrix}
	= \mathcal{B}^{-1}
	\begin{bmatrix}
 		\mathbf{f}- \mathbf{B}^T \vect{\lambda}_0 \\
 		 \mathbf{0}
 	\end{bmatrix} 
 \end{equation*}
 \end{center}
  is uniquely solvable and equivalent to \eqref{eq:db_feti_alternative_system}. 
Finally, the AF-IETI method is the MINRES method applied to the preconditioned system above.
\section{Solving the local problems}
 \label{sec:local-solvers} 
This section deals with the definition of $\mathbf{P}_A$ and
 $\mathbf{P}_S$. These operators are selected block-diagonal, where each
 block corresponds to a patch. In particular, both the application of
 $\mathbf{P}_A^{-1} $ and the application of 
 $\mathbf{P}_S$    correspond to the solution of patch-wise elliptic
 problems.    We discuss two possible choices: the first  involves
 exact solvers   for  $\mathbf{A} + \mathbf{D}_H\mathbf{M}$ and
 $\mathbf{B}_{\Gamma}\mathbf{S}\mathbf{B}^T_{\Gamma}$,   while  the
 second represents an inexact version that makes the application of
 the whole  preconditioner $\mathcal{B}^{-1}$ more efficient.
 
\subsection{Exact local solvers}
\label{sec:exact} 
From \eqref{eq:bound_A} and \eqref{eq:bound_S}, we infer that the ideal choice for $\mathbf{P}_{A}$ and $\mathbf{P}_{S}$, respectively,  is
\begin{align}  
\mathbf{P}_{A}^{\mathrm{ex}}& =\mathbf{A} +
                            \mathbf{D}_H\mathbf{M}, \label{eq:exact_AP}\\   
 \mathbf{P}^{\mathrm{ex}}_{S}&=\mathbf{B}_{\Gamma}\mathbf{S}\mathbf{B}^T_{\Gamma}.\label{eq:exact_SP}  
\end{align}   
The matrix \eqref{eq:exact_SP}  is known in the FETI community as    Dirichlet preconditioner (see \cite{farhat1994optimal,mandel1996convergence}), as its application involves the solution of Dirichlet problems.

\subsection{Inexact local solvers}
\label{sec:inexact}
 Before introducing the inexact local solvers, we recall the definition of the Kroneker product.  

Let  $\mathbf{C}\in \mathbb{R}^{n_c\times n_c}$ and $\mathbf{D}\in \mathbb{R}^{n_d \times n_d}$ be square matrices and let the entries of the matrix $\mathbf{C}$ be denoted with $[\mathbf{C}]_{i,j}$. Then the Kronecker product between $\mathbf{C}$ and $\mathbf{D}$ is defined as 
 
 $$ \mathbf{C} \otimes \mathbf{D} := \begin{bmatrix} [\mathbf{C}]_{1,1} \mathbf{D} & \ldots & [\mathbf{C}]_{1, n_c} \mathbf{D} \\ \vdots & \ddots & \vdots \\ [\mathbf{C}]_{n_c, 1} \mathbf{D} & \ldots & [\mathbf{C}]_{n_c, n_c} \mathbf{D} \end{bmatrix} \; \in \mathbb{R}^{n_c n_d \times n_c n_d};$$
we refer to  \cite[Section 1.3.6]{Golub2012} for a survey on the properties of Kronecker  product.

First we define an approximate version $\widehat{\mathbf{A}}\in \mathbb{R}^{\mathcal{N}_{dof} \times \mathcal{N}_{dof} }$ of the matrix ${\mathbf{A}}$. In particular,
following the same ideas developed in \cite{MONTARDINI2018} for the Stokes system, we replace each local matrix $\mathbf{A}^{(k)}$ in \eqref{eq:block_A} with the block-diagonal matrix  $\widehat{\mathbf{A}}^{(k)}\in  \mathbb{R}^{\mathcal{N}_{dof}^{(k)}\times \mathcal{N}_{dof}^{(k)}}$, defined as
\begin{equation*}
\mathbf{\widehat{A}} ^{(k)}:=\begin{bmatrix}
 \mathbf{\widehat{A}}^{(k)} _{1} &  &  \\
& \ddots & \\
  & & \mathbf{\widehat{A}}^{(k)}_{d}
\end{bmatrix},
\end{equation*}
where $ \mathbf{\widehat{A}}^{(k)}_l\in\mathbb{R}^{n^{(k)}\times n^{(k)}} $ corresponds to $ \mathbf{A}^{(k)} _{l,l}$ in \eqref{eq:le_local_matrix}, but discretized in the parametric domain $\widehat{\Omega}$,   i.e,  referring to Section \ref{sec:multipatch} for the notation,
\begin{equation}
\label{eq:prec_inex_blocks}
[\mathbf{\widehat{A}}^{(k)} _{l}]_{i,j}:=\widehat{a}(\widehat{B}^{(k)}_{i,p}\mathbf{e}_l, \widehat{B}^{(k)}_{j,p}\mathbf{e}_l )
\quad \text{ for } i,j=1,\dots,n^{(k)},\end{equation}
\B  where $\mathbf{e}_l$ is the $l$-th vector of the canonical basis and
$$
 \widehat{a}(\vect{w},\vect{v}) :=  2 \mu \int_{\widehat{\Omega}} \varepsilon(\vect{w}) : \varepsilon(\vect{v}) \dFx + \lambda \int_{\widehat{\Omega}} \left(\nabla \cdot \vect{w}\right)\left( \nabla \cdot \vect{v} \right)\dFx.   
$$ 
The matrices $\mathbf{\widehat{A}}^{(k)} _{l}$ for $k=1,\dots,\numpatch$ and $l=1,\dots,d$ have a tensor product structure and, in  particular,  for $d =3$, that is the case we address in our numerical tests, we have 
\begin{align*}
\mathbf{\widehat{A}}^{(k)}_{1} & := \mu   {\widehat{K}}^{(k)}_3 \otimes  {\widehat{M}}_2^{(k)} \otimes  {\widehat{M}}^{(k)}_1
 + \mu   {\widehat{M}}^{(k)}_3 \otimes {\widehat{K}}_2^{(k)} \otimes {\widehat{M}}^{(k)}_1 + (2\mu + \lambda)  {\widehat{M}}^{(k)}_3 \otimes {\widehat{M}}^{(k)}_2 \otimes {\widehat{K}}^{(k)}_1, \\
 \mathbf{\widehat{A}}^{(k)}_{2} & := \mu {\widehat{K}}^{(k)}_3 \otimes {\widehat{M}}_2^{(k)} \otimes {\widehat{M}}^{(k)}_1
 + (2\mu + \lambda) {\widehat{M}}^{(k)}_3 \otimes {\widehat{K}}_2^{(k)} \otimes {\widehat{M}}^{(k)}_1  + \mu {\widehat{M}}^{(k)}_3 \otimes {\widehat{M}}^{(k)}_2 \otimes {\widehat{K}}^{(k)}_1, \\
\mathbf{\widehat{A}}^{(k)}_{3} & := (2\mu + \lambda) {\widehat{K}}^{(k)}_3 \otimes {\widehat{M}}_2^{(k)} \otimes {\widehat{M}}^{(k)}_1
 + \mu  {\widehat{M}}^{(k)}_3 \otimes {\widehat{K}}_2^{(k)} \otimes {\widehat{M}}^{(k)}_1  + \mu  {\widehat{M}}^{(k)}_3 \otimes {\widehat{M}}^{(k)}_2 \otimes {\widehat{K}}^{(k)}_1,
\end{align*} 
where $\widehat{{K}}_l^{(k)}$ and $\widehat{{M}}_l^{(k)}$ for $l=1,2,3$ are the local univariate stiffness and mass matrices. We remark that  in the construction of the matrices $ \widehat{K}^{(k)}_l$ and $\widehat{M}^{(k)}_l$ for $l = 1,\ldots,d$ we consider all the local degrees-of-freedom and thus also the interface degrees-of-freedom:
 the univariate stiffness matrices are always singular, which in turn means that also $\mathbf{\widehat{A}}^{(k)}$ is singular.
 
Similarly, we approximate each mass matrix ${\mathbf{M}}^{(k)}$ defined in \eqref{eq:le_local_MS} with the corresponding  mass matrix $
\mathbf{\widehat{{M}}}^{(k)}\in \mathbb{R}^{\mathcal{N}_{dof}^{(k)}\times \mathcal{N}_{dof}^{(k)}}$ in the parametric domain, that is defined component-wise  as
$$
\mathbf{\widehat{{M}}}^{(k)}:=\begin{bmatrix}
\mathbf{\widehat{M}}^{(k)}_1 & & \\
     &\ddots & \\
      & & \mathbf{\widehat{{M}}}^{(k)}_d
\end{bmatrix},
$$
where   $\mathbf{\widehat{M}}^{(k)}_l:= {\widehat{M}}^{(k)}_d \otimes \dots   \otimes {\widehat{M}}^{(k)}_1$ for $  l=1,\dots,d$ and $\widehat{M}^{(k)}_l$ are the univariate mass matrices in the $l$-th parametric direction. In particular, for $d=3$ we have that 
\begin{equation*}
\mathbf{\widehat{{M}}}_l^{(k)} :=  
{\widehat{M}}^{(k)}_3 \otimes {\widehat{M}}^{(k)}_2 \otimes {\widehat{M}}^{(k)}_1  \quad l=1,2,3.
\end{equation*}
We then define for $k=1,\dots,\numpatch  $
\begin{equation*}
   \mathbf{{P}}_{A}^{\mathrm{inex},(k)}  :=\begin{bmatrix}
\left (H^{(k)} \right )^{d-2}\left
  ( \mathbf{\widehat{A}}_1^{(k)}+ \mathbf{\widehat{{M}}}_1^{(k)}
\right ) & &\\
   & \ddots &\\
   &  & \left (H^{(k)} \right )^{d-2}\left
  ( \mathbf{\widehat{A}}_d^{(k)}+ \mathbf{\widehat{{M}}}_d^{(k)}
\right )
\end{bmatrix}   .
\end{equation*}
Note that, differently from $\widehat{\mathbf{A}}^{(k)}$, the matrices $\mathbf{\widehat{P}}_{A} ^{\mathrm{inex},(k)}$ are always positive definite, thanks to the addition of the mass matrix.

We also  introduce the   local Schur complement matrices $\mathbf{\widehat{S}}^{(k)}\in \mathbb{R}^{\mathcal{N}_{\Gamma}^{(k)}\times \mathcal{N}_{\Gamma}^{(k)}} $ associated to the matrices  $ \mathbf{\widehat{A}}^{(k)}$   for $k=1,\dots,\numpatch$, obtained by eliminating the interior degrees-of-freedom and defined as
\begin{equation}
\label{eq:prec_S_le}
\mathbf{\widehat{S}}^{(k)}:=       \mathbf{\widehat{A}}^{(k)}  _{\Gamma \Gamma }- \mathbf{\widehat{A}}^{(k)}  _{\Gamma  I}\left(\mathbf{\widehat{A}}^{(k)}  _{I I}\right)^{-1} \mathbf{\widehat{A}}^{(k)} _{I \Gamma }.
\end{equation}
Component-wise, the matrices \eqref{eq:prec_S_le} for $k=1,\dots,\numpatch$ can be split as    
$$
\mathbf{\widehat{S}}^{(k)}:=\begin{bmatrix}
\mathbf{\widehat{S}}_{1}^{(k)} & & \\
 & \ddots & \\
   & &\mathbf{\widehat{S}}_{d}^{(k)}
\end{bmatrix},
$$
where we defined for $l=1,\dots, d$
\begin{equation}
\label{eq:shur_comp}
{\widehat{\mathbf{S}}}_l^{(k)}:=      \left(\mathbf{\widehat{A}}^{(k)}_l \right)_{ \Gamma \Gamma }-\left(\mathbf{\widehat{A}}_l^{(k)} \right)_{\Gamma  I}\left[\left(\mathbf{\widehat{A}}^{(k)}_l \right)_{I I}\right]^{-1}\left(\mathbf{\widehat{A}}^{(k)}_l \right)_{I \Gamma}.
\end{equation}

In conclusion,   the inexact choice that we propose for $\mathbf{P}_A$
and   $\mathbf{P}_S$, respectively, is  
\begin{align} 
\mathbf{P}_{A}^{\mathrm{inex}}& :=\begin{bmatrix}
 \mathbf{{P}}_{A}^{\mathrm{inex},(1)}  & & \\
  & \ddots & \\
    & &  \mathbf{{P}}_{A}^{\mathrm{inex},(\numpatch)} 
 \end{bmatrix}, \label{eq:inexact_AP}\\ 
 \mathbf{P}_{S}^{\mathrm{inex}}& := \begin{bmatrix}
 \left (H^{(1)} \right )^{d-2} \mathbf{B}_{\Gamma}^{(1)} \mathbf{\widehat{S}}^{(1)}\left(\mathbf{B}^{(1)}_{\Gamma}\right)^T & & \\
  & \ddots & \\
    & &  \left (H^{(\numpatch)} \right )^{d-2} \mathbf{B}_{\Gamma}^{(\numpatch)}\mathbf{\widehat{S}}^{(\numpatch)}\left( \mathbf{B}_{\Gamma}^{(\numpatch)}\right)^T
 \end{bmatrix},\label{eq:inexact_SP}
\end{align}  
 where, for $k=1,\dots,\numpatch$,  $\mathbf{B}_{\Gamma}^{(k)}$ denotes the restriction of $\mathbf{B}_{\Gamma}$ to the $k$-th patch degrees-of-freedom.


\subsubsection{ Fast algorithm  }  
\label{sec:precapp}

At each iteration of the  preconditioned MINRES method  we have to compute the application  of the preconditioner   $\mathcal{B}^{-1}$   with the choice   $\mathbf{P}_A=\mathbf{P}_A^{\mathrm{inex}} $ and $\mathbf{P}_S = \mathbf{P}_S^{\mathrm{inex}}$, defined in \eqref{eq:inexact_AP}-\eqref{eq:inexact_SP}.   The computational core of the operation above   is the   computation of the solution of linear systems with matrices   $\widehat{\mathbf{P}}_A $ and   $\widehat{\mathbf{A}}_{II}$ and in particular of their local component-wise blocks   $\left (H^{(k)} \right )^{d-2}\left
  ( \mathbf{\widehat{A}}_l^{(k)}+ \mathbf{\widehat{{M}}}_l^{(k)}
\right )$  and $(\mathbf{\widehat{A}}_l^{(k)})_{II}$ for $l=1,\dots,d$ and  for $k=1,\ldots,\numpatch$. We  use   the FD method, for which we give full details in what follows.  

Following~\cite{MR3572372}, for 
a given patch $\Omega^{(k)}$ we consider   the eigendecompositions of the pencils $(\widehat{K}^{(k)}_l, \widehat{M}^{(k)}_l)$ and $((\widehat{K}^{(k)}_l)_{II}, (\widehat{M}^{(k)}_l)_{II})$: we find two couples of matrices $({D}^{(k)}_l$, $ {U}^{(k)}_l)$ and $(\widetilde{D}^{(k)}_l$, $ {\widetilde{U}}^{(k)}_l)$ such that 
\begin{equation}
\label{eq:eigendec_pa}
\widehat{K}^{(k)}_l = ({U}^{(k)}_l)^{-T} {D}^{(k)}_l ({U}^{(k)}_l)^{-1} \quad \text{ and } \quad \widehat{M}^{(k)}_l = ( {U}^{(k)}_l)^{-T} ( {U}^{(k)}_l)^{-1},
\end{equation}
\begin{equation*}
(\widehat{K}^{(k)}_l)_{II} = (\widetilde{U}^{(k)}_l)^{-T} \widetilde{D}^{(k)}_l (\widetilde{U}^{(k)}_l)^{-1} \quad \text{ and } \quad (\widehat{M}^{(k)}_l)_{II} = ( \widetilde{U}^{(k)}_l)^{-T} ( \widetilde{U}^{(k)}_l)^{-1},
\end{equation*}
where ${D}^{(k)}_l$ and $\widetilde{D}^{(k)}_l$ are diagonal matrices containing the generalized eigenvalues,    while the columns of $ {U}^{(k)}_l$ and $\widetilde{U}^{(k)}_l$ contain the corresponding normalized eigenvectors, respectively.
Then,  
  $\left (H^{(k)} \right )^{d-2}\left
  ( \mathbf{\widehat{A}}_l^{(k)}+ \mathbf{\widehat{{M}}}_l^{(k)}
\right )$  
can be rewritten   in this form
\begin{equation}
\label{eq:fac_ai}
  \left (H^{(k)} \right )^{d-2}\left
  ( \mathbf{\widehat{A}}_l^{(k)}+ \mathbf{\widehat{{M}}}_l^{(k)}
\right )    =({U}^{(k)}_d\otimes\dots\otimes{U}^{(k)}_1)^{-T}\mathbf{\Lambda}_l^{(k)} ({U}^{(k)}_d\otimes\dots\otimes {U}^{(k)}_1)^{-1}
\end{equation}
where $\mathbf{\Lambda}_l^{(k)}$ are diagonal matrices for $l=1,\dots,d$, that, e.g. in the case $d=3$, are defined as  
\begin{align*}
\mathbf{\Lambda}_1^{(k)}&=  {\left(H^{(k)}\right)^{d-2}} \left[ \mathbf{I}_{m_3^{(k)}}\otimes\mathbf{I}_{m_2^{(k)}}\otimes L^{(k)}_1 + \mu  \mathbf{I}_{m_3^{(k)}}\otimes D_2^{(k)}\otimes\mathbf{I} _{m_1^{(k)}}+ \mu D_3^{(k)}\otimes\mathbf{I}_{m_2^{(k)}}\otimes\mathbf{I}_{m_1^{(k)}} \right],\\ 
\mathbf{\Lambda}_2^{(k)}& = {\left(H^{(k)}\right)^{d-2}} \left[ \mu  \mathbf{I}_{m_3^{(k)}}\otimes\mathbf{I}_{m_2^{(k)}}\otimes D_1^{(k)} + \mathbf{I}_{m_3^{(k)}}\otimes  L^{(k)}_2\otimes \mathbf{I}_{m_1^{(k)}}  +   \mu D_3^{(k)}\otimes\mathbf{I}_{m_2^{(k)}}\otimes\mathbf{I}_{m_1^{(k)}}\right],\\
\mathbf{\Lambda}_3^{(k)}& = {\left(H^{(k)}\right)^{d-2}} \left[ \mu  \mathbf{I}_{m_3^{(k)}}\otimes\mathbf{I}_{m_2^{(k)}}\otimes D_1^{(k)}  + \mu  \mathbf{I}_{m_3^{(k)}}\otimes D_2^{(k)}\otimes\mathbf{I}_{m_1^{(k)}} +L^{(k)}_3\otimes  \mathbf{I}_{m_2^{(k)}}\otimes\mathbf{I}_{m_1^{(k)}}\right],
\end{align*}  
  where we defined the diagonal matrices 
$
L^{(k)}_l:=  (2\mu+\lambda)D_l^{(k)}+ \mathbf{I}_{m_l^{(k)}}  
$ for $l=1,2,3$.
 
Similarly, we have for the local  interior matrix $(\widehat{\mathbf{A}}_l^{(k)})_{II}$ the factorization
\begin{equation}
\label{eq:fac_schur}
(\widehat{\mathbf{A}}_l^{(k)})_{II}= (\widetilde{U}^{(k)}_d\otimes\dots\otimes\widetilde{U}^{(k)}_1)^{-T}\widetilde{\mathbf{\Lambda}}_l^{(k)} (\widetilde{U}^{(k)}_d\otimes\dots\otimes\widetilde{U}^{(k)}_1)^{-1}
\end{equation}
where $\widetilde{\mathbf{\Lambda}}_l^{(k)}$ are diagonal matrices for $l=1,\dots,d$, that, in the case $d=3$,  are defined as 
\begin{align*}
\widetilde{\mathbf{\Lambda}}_1^{(k)}&= (2\mu+\lambda) \mathbf{I}_{m_3^{(k)}-2}\otimes\mathbf{I}_{m_2^{(k)}-2}\otimes  \widetilde{D}_1^{(k)}    + \mu  \mathbf{I}_{m_3^{(k)}-2}\otimes \widetilde{D}_2^{(k)}\otimes\mathbf{I}_{m_1^{(k)}-2} + \mu\widetilde{D}_3^{(k)}\otimes\mathbf{I}_{m_2^{(k)}-2}\otimes\mathbf{I}_{m_1^{(k)}-2} ,\\ 
\widetilde{\mathbf{\Lambda}}_2^{(k)}& = \mu    \mathbf{I}_{m_3^{(k)}-2}\otimes\mathbf{I}_{m_2^{(k)}-2}\otimes \widetilde{D}_1^{(k)} +  (2\mu+\lambda)\mathbf{I}_{m_3^{(k)}-2}\otimes  \widetilde{D}_2^{(k)} \otimes \mathbf{I}_{m_1^{(k)}-2}  +   \mu\widetilde{D}_3^{(k)}\otimes\mathbf{I}_{m_2^{(k)}-2}\otimes\mathbf{I}_{m_1^{(k)}-2} ,\\
\widetilde{\mathbf{\Lambda}}_3^{(k)}& = \mu  \mathbf{I}_{m_3^{(k)}-2}\otimes\mathbf{I}_{m_2^{(k)}-2}\otimes \widetilde{D}_1^{(k)}  + \mu  \mathbf{I}_{m_3^{(k)}-2}\otimes \widetilde{D}_2^{(k)}\otimes\mathbf{I}_{m_1^{(k)}-2} +  (2\mu+\lambda)\widetilde{D}_3^{(k)}\otimes\mathbf{I}_{m_2^{(k)}-2}\otimes\mathbf{I}_{m_1^{(k)}-2} .
\end{align*} 
 The  direct inversion  of \eqref{eq:fac_ai} and \eqref{eq:fac_schur}   can be efficiently computed with the FD method ~\cite{MR3572372}. We detail below the algorithm for the solution of the system
$$
  \left (H^{(k)} \right )^{d-2}\left
  ( \mathbf{\widehat{A}}_l^{(k)}+ \mathbf{\widehat{{M}}}_l^{(k)}
\right )    \  \mathbf{s}=({U}^{(k)}_d\otimes\dots\otimes{U}^{(k)}_1)^{-T}\mathbf{\Lambda}_l^{(k)} ({U}^{(k)}_d\otimes\dots\otimes {U}^{(k)}_1)^{-1}\mathbf{s} =\mathbf{r}.
$$
The same algorithm, with obvious modifications, can be used for the solution of a system with matrix $(\widehat{\mathbf{A}}_{l}^{(k)})_{II}$.
\begin{algorithm}[H]
\caption{  FD method }\label{al:direct_P}
\begin{algorithmic}[1]
\State Compute the generalized eigendecomposition  \eqref{eq:eigendec_pa}.
\State Compute $\widetilde{\mathbf{r}} =   {(  U_d^{(k)} \otimes \dots\otimes U_1^{(k)})^T \mathbf{r}}$.
\State Compute $\widetilde{\mathbf{q}} = \left(\mathbf{\Lambda}_l^{(k)}\right)^{-1} \widetilde{\mathbf{r}}. $
\State Compute $\mathbf{s} =    ( U_d^{(k)} \otimes \dots\otimes U_1^{(k)})\ \widetilde{\mathbf{q}}. $
\end{algorithmic}
\end{algorithm}

We now   discuss the computational cost of Algorithm \ref{al:direct_P}, and for this purpose we assume for simplicity that    the matrices $\widehat{K}^{(k)}_l$ and $\widehat{M}_l^{(k)}$ have the same order $n$ for $l=1,\dots,d$, i.e. $\mathcal{N}_{dof}^{(k)}=n^d$.
Step 1, that represents the setup of the preconditioner and can therefore be performed only once, requires $O(d n^{3})$ FLOPs, which is optimal (i.e. proportional to the number of degrees-of-freedom $\mathcal{N}_{dof}^{(k)}$) for $d=3$. Step 3 involves the inversion of a diagonal matrix, which always yields an optimal cost. The leading cost of Algorithm \ref{al:direct_P} is given by Step 2 and Step 4, each of which can be rewritten in terms of $d$ dense matrix-matrix products, and require a total of $4d n \mathcal{N}_{dof}^{(k)}$ FLOPs. Although this cost is slightly sub-optimal, thanks to the high efficient implementation of dense matrix-matrix products in modern computers, in practice
the computational time spent by Algorithm \ref{al:direct_P} turns out to be negligible in the overall Krylov method, up to an  very   large number of degrees-of-freedom
 (see~\cite[Table 10]{MR3572372} and \cite{montardini2018space}).
  

\subsubsection{Spectral estimates  }
\label{sec:specest}

In this section, we prove the spectral estimates \eqref{eq:bound_A} and \eqref{eq:bound_S}   for the choice   $\mathbf{P}_A=\mathbf{P}_A^{\mathrm{inex}} $ and $\mathbf{P}_S = \mathbf{P}_S^{\mathrm{inex}}$,  defined in \eqref{eq:inexact_AP}-\eqref{eq:inexact_SP}.  
In particular, we will show  that the bounds \eqref{eq:bound_A} and \eqref{eq:bound_S} hold with constants independent of   $h, H$   and $p$  by restricting to a single patch $\Omega^{(k)}$ with  $k=1,\ldots,\numpatch$.
The final result is summarized in Proposition~\ref{thm:main}.

\begin{lemma}
\label{l:le_A0}
There are positive constant $\tilde{a}^{(k)}_0$ and
$\tilde{a}^{(k)}_1$, independent of $h$,   $H$ and $p$, such that 
\begin{eqnarray}
\label{eq:le_A0_lower}
 {\left(H^{(k)}\right)^{d-2}}  \mathbf{u}^T \widehat{\mathbf{A}}^{(k)} \mathbf{u} & \geq & \tilde{a}^{(k)}_0 \mathbf{u}^T \mathbf{A}^{(k)} \mathbf{u}, \qquad \forall \,  \mathbf{u} \in \mathbb{R}^{\mathcal{N}^{(k)}_{dof}}, \\
	\label{eq:le_A0_upper}
  {\left(H^{(k)}\right)^{d-2}}  \mathbf{u}^T \widehat{\mathbf{A}}^{(k)} \mathbf{u} & \leq & \tilde{a}^{(k)}_1 \mathbf{u}^T \mathbf{A}^{(k)} \mathbf{u}, \qquad \forall \,  \mathbf{u} \in range(\mathbf{A}^{(k)}).
	\end{eqnarray} 
\end{lemma}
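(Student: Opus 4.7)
The plan is to reduce the comparison between $\mathbf{A}^{(k)}$ and $\widehat{\mathbf{A}}^{(k)}$ to a two-step argument on the parametric domain $\widehat{\Omega}$: a change-of-variable step that produces the factor $(H^{(k)})^{d-2}$, and a block-diagonal versus full-elasticity comparison on $\widehat{\Omega}$. Denoting by $\vect{u}_h\in\mathcal{V}_h^{(k)}$ the function with coordinate vector $\mathbf{u}$ and setting $\widehat{\vect{u}}:=\vect{u}_h\circ\mathcal{F}^{(k)}$ with components $\widehat{u}_l$, the two quadratic forms read
$$ \mathbf{u}^T\mathbf{A}^{(k)}\mathbf{u} = a|_{\Omega^{(k)}}(\vect{u}_h,\vect{u}_h), \qquad \mathbf{u}^T\widehat{\mathbf{A}}^{(k)}\mathbf{u} = \sum_{l=1}^d \widehat{a}(\widehat{u}_l\mathbf{e}_l,\widehat{u}_l\mathbf{e}_l). $$
By Assumption~\ref{ass:quasi-uniformity}, $|\det J_{\mathcal{F}^{(k)}}|\simeq (H^{(k)})^d$ and $\|J_{\mathcal{F}^{(k)}}^{-1}\|\simeq (H^{(k)})^{-1}$ uniformly on $\widehat{\Omega}$; transforming the volume element and the gradients in $a|_{\Omega^{(k)}}$, the combination $|\det J|\cdot\|J^{-1}\|^2$ collapses to $(H^{(k)})^{d-2}$ and one obtains
$$ a|_{\Omega^{(k)}}(\vect{u}_h,\vect{u}_h) \simeq (H^{(k)})^{d-2}\,\widetilde{a}(\widehat{\vect{u}},\widehat{\vect{u}}),$$
where $\widetilde{a}$ is a geometry-weighted parametric elasticity form, spectrally equivalent to $\widehat{a}$ on $[H^1(\widehat{\Omega})]^d$ with constants depending only on the (uniformly bounded) regularity of $\mathcal{F}^{(k)}$. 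The lemma thus reduces to comparing $\widehat{a}(\widehat{\vect{u}},\widehat{\vect{u}})$ with $\sum_l \widehat{a}(\widehat{u}_l\mathbf{e}_l,\widehat{u}_l\mathbf{e}_l)$.

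For the lower estimate \eqref{eq:le_A0_lower}, I would expand $\widehat{a}(\widehat{\vect{u}},\widehat{\vect{u}})$ into its diagonal and off-diagonal parts. A direct calculation shows that each cross term $\widehat{a}(\widehat{u}_i\mathbf{e}_i,\widehat{u}_j\mathbf{e}_j)$, $i\ne j$, reduces to integrals of $\partial_j\widehat{u}_i\,\partial_i\widehat{u}_j$ and $\partial_i\widehat{u}_i\,\partial_j\widehat{u}_j$, each controlled by Cauchy--Schwarz by $\sum_l \widehat{a}(\widehat{u}_l\mathbf{e}_l,\widehat{u}_l\mathbf{e}_l)$. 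Hence $\widehat{a}(\widehat{\vect{u}},\widehat{\vect{u}})\le C\sum_l \widehat{a}(\widehat{u}_l\mathbf{e}_l,\widehat{u}_l\mathbf{e}_l)$ on the whole space, and composing with the pull-back equivalence yields \eqref{eq:le_A0_lower} for every $\mathbf{u}\in\mathbb{R}^{\mathcal{N}_{dof}^{(k)}}$.

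For the upper estimate \eqref{eq:le_A0_upper} the inequality goes the wrong way: each block $\widehat{a}(\widehat{u}_l\mathbf{e}_l,\widehat{u}_l\mathbf{e}_l)$ is equivalent to $\|\nabla\widehat{u}_l\|_{L^2(\widehat{\Omega})}^2$, so the sum controls the full $H^1$-seminorm of $\widehat{\vect{u}}$, whereas $\widehat{a}(\widehat{\vect{u}},\widehat{\vect{u}})$ only controls $\|\varepsilon(\widehat{\vect{u}})\|_{L^2}^2$. I would close the gap via Korn's second inequality on the fixed domain $\widehat{\Omega}$, which is valid modulo the $6$-dimensional space of parametric rigid body motions. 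The main obstacle is that $\mathbf{u}\in\mathrm{range}(\mathbf{A}^{(k)})$ only encodes orthogonality of $\vect{u}_h$ to the physical rigid body motions $\vect{b}+R\,x$, whose pull-backs $\vect{b}+R\,\mathcal{F}^{(k)}(\widehat{x})$ span a $6$-dimensional subspace of $[H^1(\widehat{\Omega})]^d$ that is transverse to, but does not coincide with, the parametric rigid body motions. To bridge this, I would exhibit the isomorphism between the two $6$-dimensional rigid body subspaces, whose condition number under Assumption~\ref{ass:quasi-uniformity} depends only on $\mathcal{F}^{(k)}$, subtract a controlled parametric rigid body correction from $\widehat{\vect{u}}$, and apply Korn's inequality to the remainder. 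Combined with the pull-back step this delivers \eqref{eq:le_A0_upper} with constants independent of $h$, $H$ and $p$.
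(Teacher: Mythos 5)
The hinge of your argument --- that the exact pull-back $\widetilde a$ of the physical elasticity form is ``spectrally equivalent'' to the parametric elasticity form $\widehat a$ on $[H^1(\widehat\Omega)]^d$ --- is false for a general patch parametrization, and both of your bounds funnel through it. The two quadratic forms have different kernels: $\widehat a(\cdot,\cdot)$ vanishes on the parametric rigid body motions (which lie in the spline space for $p\ge 1$), while $\widetilde a$ vanishes on the pull-backs of the \emph{physical} rigid body motions; for a non-affine $\mathcal{F}^{(k)}$ these two six-dimensional spaces differ, so neither one-sided inequality $\widetilde a\le C\,\widehat a$ nor $\widehat a\le C\,\widetilde a$ can hold on the whole space. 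Concretely, in your proof of \eqref{eq:le_A0_lower} the step ``composing with the pull-back equivalence'' needs $\widetilde a\le C\,\widehat a$: taking $\widehat{\vect u}$ equal to a parametric infinitesimal rotation gives $\widehat a(\widehat{\vect u},\widehat{\vect u})=0$ while $\mathbf{u}^T\mathbf{A}^{(k)}\mathbf{u}>0$, and since \eqref{eq:le_A0_lower} must hold for \emph{all} $\mathbf{u}\in\mathbb{R}^{\mathcal{N}^{(k)}_{dof}}$ this counterexample is in scope. The symmetric gradient and the divergence simply do not transform equivalently under $\mathcal{F}^{(k)}$. What you actually need is weaker and true: $\mathbf{u}^T\mathbf{A}^{(k)}\mathbf{u}\le(2\mu+d\lambda)\,|\vect u_h|^2_{H^1(\Omega^{(k)})}$, change variables for the \emph{full} gradient seminorm (which is two-sidedly comparable under the map, with the $(H^{(k)})^{d-2}$ scaling), and then use that each diagonal block satisfies $\widehat a(\widehat u_{h,l}\mathbf{e}_l,\widehat u_{h,l}\mathbf{e}_l)\ge\mu\,|\widehat u_{h,l}|^2_{H^1(\widehat\Omega)}$, so the block-diagonal form dominates $\mu|\widehat{\vect u}_h|^2_{H^1(\widehat\Omega)}$. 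The comparison must go through the $H^1$-seminorm, never through $\widehat a(\widehat{\vect u},\widehat{\vect u})$; this is exactly the paper's route.

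Your plan for \eqref{eq:le_A0_upper} has the same problem in disguise: Korn on $\widehat\Omega$ modulo \emph{parametric} rigid motions controls $|\widehat{\vect u}|^2_{H^1(\widehat\Omega)}$ by $\|\varepsilon(\widehat{\vect u})\|^2_{L^2(\widehat\Omega)}$, i.e.\ by $\widehat a(\widehat{\vect u},\widehat{\vect u})$, and you would still have to dominate that by the physical energy --- the non-transformability of the symmetric gradient reappears, and subtracting a parametric rigid correction does not remove it, so the proposed ``bridge'' between the two rigid-motion spaces does not close the chain. The paper sidesteps all of this: it bounds $(H^{(k)})^{d-2}\mathbf{u}^T\widehat{\mathbf{A}}^{(k)}\mathbf{u}\le(2\mu+\lambda)(H^{(k)})^{d-2}|\widehat{\vect u}_h|^2_{H^1(\widehat\Omega)}$, maps the gradient seminorm to the physical patch, and applies Korn \emph{on $\Omega^{(k)}$} (Lemma 4 of the cited Klawonn--Widlund paper), where $\mathbf{u}\in\mathrm{range}(\mathbf{A}^{(k)})$ directly supplies the complement of the physical rigid body motions, and finally uses $2\mu\|\varepsilon(\vect u_h)\|^2_{L^2(\Omega^{(k)})}\le\mathbf{u}^T\mathbf{A}^{(k)}\mathbf{u}$; no comparison of the two rigid-motion spaces is ever needed. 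A minor further point: Assumption~\ref{ass:quasi-uniformity} does not give $|\det J_{\mathcal{F}^{(k)}}|\simeq (H^{(k)})^{d}$ or $\|J_{\mathcal{F}^{(k)}}^{-1}\|\simeq (H^{(k)})^{-1}$; the paper instead keeps $\sup_{\widehat{\mathbf x}}|\det((H^{(k)})^{-1}J_{\mathcal{F}^{(k)}})|\,\|H^{(k)}J_{\mathcal{F}^{(k)}}^{-1}\|_2^2$ as a shape-dependent constant, invariant under homothety, which is how independence of $H^{(k)}$, $h$ and $p$ is obtained.
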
 

\begin{proof}
Let $\vect{u}_h \in \mathcal{V}^{(k)}_h$ and let $\mathbf{u} \in  \mathbb{R}^{\mathcal{N}^{(k)}_{dof}}$ be its coordinate vector. Let also  $\vect{\widehat{u}}_h := \vect{u}_h \circ \mathcal{F}^{(k)}  $ and let $\vect{\widehat{u}}_{h,l}$ for $l=1,\dots, d$ denote its  components, i.e. $\vect{\widehat{u}}_{h}=\sum_{l=1}^d \vect{\widehat{u}}_{h,l}\mathbf{e}_l$. Note that, using \eqref{eq:prec_inex_blocks}, we have that 
\begin{equation}
\label{eq:estimate_a}
 \mu| \widehat{\vect{u}}_h|^2_{H^1(\widehat{\Omega})}= \sum_{l=1}^d \mu| \widehat{\vect{u}}_{h,l} |^2_{H^1(\widehat{\Omega})}\leq \sum_{l=1}^d \left(\mu| \widehat{\vect{u}}_{h,l} |^2_{H^1(\widehat{\Omega})}+    ( \lambda + \mu )\left\| \frac{\partial}{\partial \widehat{\vect{x}}_l}\widehat{\vect{u}}_{h,l}\right\|^2_{L^2(\widehat{\Omega})}\right)= \mathbf{u}^T \widehat{\mathbf{A}}^{(k)}\mathbf{u}.
\end{equation}
  Denoting by $\|\cdot\|_2$  the norm induced by the Euclidean vector norm and    using the definition of  $a(\cdot, \cdot)$ in \eqref{eq:bil_lin} and
the estimate   \eqref{eq:estimate_a},    we get

\begin{eqnarray*}  
\mathbf{u}^T \mathbf{A}^{(k)}\mathbf{u} & = & 2\mu \|\varepsilon(\vect{u}_h)\|_{L^2(\Omega^{(k)})}^2 + \lambda \left\| \nabla \cdot \vect{u}_h \right\|_{L^2(\Omega^{(k)})}^2 \nonumber \\
& \leq &(2\mu  + d\lambda)|\vect{u}_h|_{H^1(\Omega^{(k)})}^2  \nonumber \\ 
& \leq &  \frac{ 2\mu  + d\lambda}{\mu}\sup_{\widehat{\mathbf x}\in\widehat{\Omega}}\left\{ \left|\mathrm{det}(J_{\mathcal{F}^{(k)}}(\widehat{\mathbf x})) \right| \left\| J_{\mathcal{F}^{(k)}}^{-1}(\widehat{\mathbf x})\right\|_2^2\right\}  \mu | \vect{\widehat{u}}_h |_{H^1(\widehat{\Omega})}^2  \\
& \leq & \frac{ 2\mu  + d\lambda}{\mu}\sup_{\widehat{\mathbf
         x}\in\widehat{\Omega}}\left\{ \left|\mathrm{det}\left
         (\left(H^{(k)}\right)^{-1} J_{\mathcal{F}^{(k)}}(\widehat{\mathbf x})
         \right) \right| \left\| H^{(k)}
         J_{\mathcal{F}^{(k)}}^{-1}(\widehat{\mathbf
         x})\right\|_2^2\right\} \left(H^{(k)}\right)^{d-2}   \mu | \vect{\widehat{u}}_h |_{H^1(\widehat{\Omega})}^2  \\
& \leq &     \frac{ {\left(H^{(k)}\right)^{d-2}}}{\widetilde{a}_0^{(k)}} \mathbf{u}^T  \widehat{\mathbf{A}}^{(k)}\mathbf{u}  \nonumber
\end{eqnarray*} 
where $ \displaystyle \frac{1}{\tilde{a}_0^{(k)}}:= \frac{ 2\mu  + d\lambda}{\mu}\sup_{\widehat{\mathbf
         x}\in\widehat{\Omega}}\left\{ \left|\mathrm{det}\left
         (\left(H^{(k)}\right)^{-1} J_{\mathcal{F}^{(k)}}(\widehat{\mathbf x})
         \right) \right| \left\| H^{(k)}
         J_{\mathcal{F}^{(k)}}^{-1}(\widehat{\mathbf
         x})\right\|_2^2\right\}  $, and inequality
   \eqref{eq:le_A0_lower} is proven. The quantity
   \begin{displaymath}
     \sup_{\widehat{\mathbf
         x}\in\widehat{\Omega}}\left\{ \left|\mathrm{det}\left
         ((H^{(k)})^{-1} J_{\mathcal{F}^{(k)}}(\widehat{\mathbf x})
         \right) \right| \left\| H^{(k)}
         J_{\mathcal{F}^{(k)}}^{-1}(\widehat{\mathbf
         x})\right\|_2^2\right\} 
   \end{displaymath}
depends on $\left(H^{(k)}\right)^{-1} J_{\mathcal{F}^{(k)}}$ and then depends on
the \emph{shape} of the patch $\Omega^{(k)}$ but is independent of its
actual diameter  $H^{(k)}$, that is, it is  invariant for homothety
transformations of $\Omega^{(k)}$. In this sense,  $\tilde{a}_0^{(k)}$
is independent of $H^{(k)} $, and of course independent of $p$ and of
$h^{(k)} $. \B

We now turn to the proof of bound \eqref{eq:le_A0_upper}.   As above,    $\mathbf{u}\in \textrm{range}(\mathbf{A}^{(k)})$ is the vector that represents $\vect{u}_h\in \mathcal{V}^{(k)}_h$  and  $\widehat{\vect{u}}_h:=\vect{u}_h\circ \mathcal{F}^{(k)}.$ In this case, we  use the Korn inequality (see~\cite[Lemma 4]{MR1787293}) 
\begin{equation*}  C_{\mathrm{korn}}^{(k)} |\vect{u}_h|_{H^1(\Omega^{(k)})}^2 \; \leq \; \|\varepsilon(\vect{u}_h)\|_{L^2(\Omega^{(k)})}^2, \end{equation*}
where $C_{\mathrm{korn}}^{(k)}$ is a positive constant that depends only on $\Omega^{(k)}$. Thus  
\begin{eqnarray*} 
 {\left(H^{(k)}\right)^{d-2}}  \B\mathbf{u}^T  \widehat{\mathbf{A}}^{(k)}\mathbf{u}   &  \leq &   {\left(H^{(k)}\right)^{d-2}}   \left( 2\mu \|\varepsilon(\widehat{\vect{u}}_h)\|_{L^2(\widehat{\Omega})}^2+\lambda |\widehat{\vect{u}}_h|^2_{H^1(\widehat{\Omega})} \right)  \leq    {\left(H^{(k)}\right)^{d-2}}    (2\mu + \lambda)  |\widehat{\vect{u}}_h|^2_{H^1(\widehat{\Omega})}\\
& \leq &    \sup_{\widehat{\mathbf
         x}\in\widehat{\Omega}}\left\{\left|\ \mathrm{det}\left(  {H^{(k)}}    J^{-1}_{ \mathcal{F}^{(k)} }(\widehat{\mathbf x})\right) \right| \left\| {  {\left(H^{(k)}\right)^{-1}}  }  J_{ \mathcal{F}^{(k)} }(\widehat{\mathbf x})\right\|_2^2\right\} (2\mu + \lambda)|\vect{u}_h|_{H^1(\Omega^{(k)})}^2  \nonumber \\ 
& \leq &      \sup_{\widehat{\mathbf
         x}\in\widehat{\Omega}}\left\{\left|\ \mathrm{det}\left(  {H^{(k)}}    J^{-1}_{ \mathcal{F}^{(k)} }(\widehat{\mathbf x})\right) \right| \left\| {  {\left(H^{(k)}\right)^{-1}}  }  J_{ \mathcal{F}^{(k)} }(\widehat{\mathbf x})\right\|_2^2\right\} \frac{2\mu+\lambda}{2\mu  C^{(k)}_{\mathrm{korn}}} 2 \mu \|\varepsilon(\vect{u}_h)\|_{L^2(\Omega^{(k)})}^2  \\
& \leq &\widetilde{a}_1^{(k)} \left(  2 \mu \|\varepsilon(\vect{u}_h)\|_{L^2(\Omega^{(k)})}^2 + \lambda\left\|\nabla \cdot  \vect{u}_h \right\|^2_{L^2(\Omega^{(k)})} \right) \\
 & = & \widetilde{a}_1^{(k)} \mathbf{u}^T {\mathbf{A}}^{(k)}\mathbf{u}, \nonumber
\end{eqnarray*}
where $\widetilde{a}_1^{(k)}:=  \displaystyle \sup_{\widehat{\mathbf
         x}\in\widehat{\Omega}}\left\{\left|\ \mathrm{det}\left( {H^{(k)}}    J^{-1}_{ \mathcal{F}^{(k)} }(\widehat{\mathbf x})\right) \right| \left\|  {  {\left(H^{(k)}\right)^{-1}}  }   J_{ \mathcal{F}^{(k)} }(\widehat{\mathbf x}) \right\|_2^2\right\} \frac{2\mu + \lambda}{2\mu C^{(k)}_{\mathrm{korn}}}  $.
\end{proof}
 
\begin{lemma}
\label{l:le_A}
There are positive constants $ a^{(k)}_0$ and $a^{(k)}_1$, independent of $h,   H   $ and $p$, such that  for all $ \mathbf{u}\in \textrm{range}(\mathbf{A}^{(k)})$
\begin{equation*} 
 {a}^{(k)}_0\mathbf{u}^T\left(\mathbf{A}^{(k)} +   \left(H^{(k)}\right)^{-2}   \mathbf{M}^{(k)}\right)\mathbf{u} \; \leq \; \mathbf{u}^T\widehat{\mathbf{P}}^{(k)}_A\mathbf{u} \; \leq \; {a}^{(k)}_1\mathbf{u}^T\left(\mathbf{A}^{(k)} +   \left(H^{(k)}\right)^{-2}   \mathbf{M}^{(k)}\right)\mathbf{u}. 
\end{equation*} 
\end{lemma}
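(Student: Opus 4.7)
The plan is to decompose the statement into two independent equivalences and combine them. Note that $\widehat{\mathbf{P}}_A^{(k)} = (H^{(k)})^{d-2}(\widehat{\mathbf{A}}^{(k)} + \widehat{\mathbf{M}}^{(k)})$, so it suffices to show that
\begin{equation*}
(H^{(k)})^{d-2}\,\mathbf{u}^T\widehat{\mathbf{A}}^{(k)}\mathbf{u} \;\simeq\; \mathbf{u}^T\mathbf{A}^{(k)}\mathbf{u}
\qquad\text{and}\qquad
(H^{(k)})^{d-2}\,\mathbf{u}^T\widehat{\mathbf{M}}^{(k)}\mathbf{u} \;\simeq\; (H^{(k)})^{-2}\,\mathbf{u}^T\mathbf{M}^{(k)}\mathbf{u},
\end{equation*}
with the first equivalence restricted to $\textrm{range}(\mathbf{A}^{(k)})$. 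Summing these two equivalences (with matching scalings) and using $\widehat{\mathbf{P}}_A^{(k)} = (H^{(k)})^{d-2}\widehat{\mathbf{A}}^{(k)} + (H^{(k)})^{d-2}\widehat{\mathbf{M}}^{(k)}$ immediately yields the claim.

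The stiffness part is already available: Lemma~\ref{l:le_A0} gives exactly the bound $(H^{(k)})^{d-2}\mathbf{u}^T\widehat{\mathbf{A}}^{(k)}\mathbf{u} \simeq \mathbf{u}^T\mathbf{A}^{(k)}\mathbf{u}$ on $\textrm{range}(\mathbf{A}^{(k)})$, with constants depending only on the shape of $\mathcal{F}^{(k)}$ (via $\|(H^{(k)})^{-1}J_{\mathcal{F}^{(k)}}\|$, $\|H^{(k)}J_{\mathcal{F}^{(k)}}^{-1}\|$ and the Lam\'{e} coefficients), and independent of $h^{(k)}$, $H^{(k)}$ and $p$.

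For the mass part, I will perform a pull-back to the parametric domain exactly as in the proof of Lemma~\ref{l:le_A0}. Writing $\vect{u}_h\in\mathcal{V}_h^{(k)}$ for the function associated with $\mathbf{u}$ and $\widehat{\vect{u}}_h := \vect{u}_h\circ\mathcal{F}^{(k)}$, one has $\mathbf{u}^T\mathbf{M}^{(k)}\mathbf{u} = \|\vect{u}_h\|^2_{L^2(\Omega^{(k)})}$ and $\mathbf{u}^T\widehat{\mathbf{M}}^{(k)}\mathbf{u} = \|\widehat{\vect{u}}_h\|^2_{L^2(\widehat{\Omega})}$. The change of variables $\mathbf{x}=\mathcal{F}^{(k)}(\widehat{\mathbf{x}})$ yields
\begin{equation*}
\|\vect{u}_h\|_{L^2(\Omega^{(k)})}^2 = \int_{\widehat{\Omega}} |\widehat{\vect{u}}_h(\widehat{\mathbf{x}})|^2\,|\det J_{\mathcal{F}^{(k)}}(\widehat{\mathbf{x}})|\,\dFx,
\end{equation*}
and bounding $|\det J_{\mathcal{F}^{(k)}}| = (H^{(k)})^d |\det((H^{(k)})^{-1}J_{\mathcal{F}^{(k)}})|$ from above and below by its $\sup$ and $\inf$ over $\widehat{\Omega}$ gives
\begin{equation*}
c_0\,(H^{(k)})^d\,\|\widehat{\vect{u}}_h\|^2_{L^2(\widehat{\Omega})} \;\leq\; \|\vect{u}_h\|^2_{L^2(\Omega^{(k)})} \;\leq\; c_1\,(H^{(k)})^d\,\|\widehat{\vect{u}}_h\|^2_{L^2(\widehat{\Omega})},
\end{equation*}
where $c_0$ and $c_1$ depend only on the normalized map $(H^{(k)})^{-1}J_{\mathcal{F}^{(k)}}$ (hence only on the \emph{shape} of $\Omega^{(k)}$, as in Lemma~\ref{l:le_A0}). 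Multiplying by $(H^{(k)})^{-2}$ gives the desired mass equivalence. This part holds for \emph{all} $\mathbf{u}\in\mathbb{R}^{\mathcal{N}_{dof}^{(k)}}$, no kernel restriction needed.

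I do not expect a real obstacle: the stiffness equivalence was proven in Lemma~\ref{l:le_A0} (where Korn's inequality was the only delicate ingredient, and is already absorbed there), and the mass equivalence is a purely metric computation via the Jacobian determinant. The only care required is that the upper bound on $\widehat{\mathbf{A}}^{(k)}$ in Lemma~\ref{l:le_A0} is valid only on $\textrm{range}(\mathbf{A}^{(k)})$, which is exactly the set on which the present lemma is stated, so summing the two equivalences is legitimate and produces constants $a_0^{(k)}, a_1^{(k)}$ that depend only on $\lambda,\mu$, the Korn constant of $\Omega^{(k)}$ and the shape-dependent bounds on $(H^{(k)})^{-1}J_{\mathcal{F}^{(k)}}$ and $H^{(k)}J_{\mathcal{F}^{(k)}}^{-1}$, and are in particular independent of $h^{(k)}$, $H^{(k)}$ and $p$.
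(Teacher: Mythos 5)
Your proposal is correct and follows essentially the same route as the paper: the stiffness part is delegated to Lemma~\ref{l:le_A0}, the mass part is the same change-of-variables equivalence with constants given by the $\sup$/$\inf$ of $|\mathrm{det}((H^{(k)})^{-1}J_{\mathcal{F}^{(k)}})|$ (the paper's inequality \eqref{eq:mass_equiv}), and the two are summed on $\mathrm{range}(\mathbf{A}^{(k)})$, exactly where the upper bound of Lemma~\ref{l:le_A0} is available.
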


\begin{proof}
Let $\vect{u}_h \in \mathcal{V}_h^{(k)}$ and let $\mathbf{u} \in  \mathbb{R}^{\mathcal{N}^{(k)}_{dof}}$ be its coordinate vector. Let also $\vect{\widehat{u}}_h := \vect{u}_h \circ \mathcal{F}^{(k)}$.
It holds
\begin{align*}
  \left(H^{(k)}\right)^{-2}   \mathbf{u}^T\mathbf{M}^{(k)}\mathbf{u} \; & = \;  \left(H^{(k)}\right)^{-2}   \int_{{\Omega}^{(k)}} \vect{{u}}_h^2 \dx \; = \;   \left(H^{(k)}\right)^{-2} \B\int_{\widehat{\Omega}} \vect{\widehat{u}}_h^2|\mathrm{det}(J_{\mathcal{F}^{(k)}})| \dFx \; \\
& \leq \; \sup_{\widehat{\mathbf
         x}\in\widehat{\Omega}}\left\{\left| \mathrm{det}\left(  \left(H^{(k)}\right)^{-1}   J_{\mathcal{F}^{(k)}}(\widehat{\mathbf{x}}) \right)\right| \right\}  \left(H^{(k)}\right)^{d-2}  \int_{\widehat{\Omega}} \vect{\widehat{u}}_h^2 \dFx 
\end{align*}
and in a similar way
\begin{equation*}
  \left(H^{(k)}\right)^{-2}  \mathbf{u}^T\mathbf{M}^{(k)}\mathbf{u} \; \geq \;
 \inf_{\widehat{\mathbf{x}}\in\widehat{\Omega}}\left\{\left|\mathrm{det}\left( \left(H^{(k)}\right)^{-1}   J_{\mathcal{F}^{(k)}}(\widehat{\mathbf{x}})\right)\right| \right \}   \left(H^{(k)}\right)^{d-2}   \int_{\widehat{\Omega}} \vect{\widehat{u}}_h^2 \dFx. \end{equation*}
Therefore, we infer
\begin{equation}
\label{eq:mass_equiv}
   \left(H^{(k)}\right)^{-2}   m_0^{(k)} \mathbf{u}^T\mathbf{M}^{(k)}\mathbf{u} \; \leq \;   \left(H^{(k)}\right)^{d-2} \B\mathbf{u}^T\widehat{\mathbf{M}}^{(k)}\mathbf{u} \; \leq \;   \left(H^{(k)}\right)^{-2}   m^{(k)}_1\mathbf{u}^T\mathbf{M}^{(k)}\mathbf{u}, 
\end{equation}
with $\tfrac1{m_0^{(k)}} :=  \displaystyle \inf_{\widehat{\mathbf{x}}\in\widehat{\Omega}}\left\{\left|\mathrm{det}\left(  \left(H^{(k)}\right)^{-1}   J_{\mathcal{F}^{(k)}}(\widehat{\mathbf{x}})\right)\right| \right \} $ and $ \tfrac1{m_1^{(k)}} :=  \displaystyle \sup_{\widehat{\mathbf{x}}\in\widehat{\Omega}}\left\{\left|\mathrm{det}\left(  \left(H^{(k)}\right)^{-1}  J_{\mathcal{F}^{(k)}}(\widehat{\mathbf{x}})\right)\right| \right \} $.
 Analogously to what happens for  $\widetilde{a}_0^{(k)}$ and $\widetilde{a}_1^{(k)}$ in \eqref{eq:le_A0_lower} and \eqref{eq:le_A0_upper}, the constants $m_0^{(k)}$ and  $m_1^{(k)}$ depend only on $\left(H^{(k)}\right)^{-1}J_{\mathcal{F}^{(k)}}$ and thus on the shape of the patch $\Omega^{(k)}$ and they are independent of $H^{(k)}$, of $p$ and of $h^{(k)}$. 

The application of the inequality \eqref{eq:mass_equiv} and of Lemma \ref{l:le_A0} to a vector $\mathbf{u}\in \mathrm{range}(\mathbf{A}^{(k)})$ immediately give the required result.
\end{proof}

We now consider the estimates for $\mathbf{\widehat{S}}^{(k)}$.

\begin{lemma}
\label{l:Schur_estim_le}
Let $\mathbf{B}^{(k)}$ and $\mathbf{R}^{(k)}$ denote the restrictions  of $\mathbf{B}$  and $\mathbf{R}$, respectively, to the $k$-th patch degrees-of-freedom, and let  $\mathbf{G}^{(k)}:=\mathbf{B}^{(k)}\mathbf{R}^{(k)}$.
Then there are positive constants $s_0^{(k)}$ and $s_1^{(k)}$, independent of $h,  H  $ and $p$, such that $\forall \  \vect{\lambda}\in \mathrm{ker}\left((\mathbf{G}^{(k)})^T\right)$
\begin{equation*}
s_0^{(k)}\vect{\lambda}^T\mathbf{B}_{\Gamma}^{(k)}\mathbf{S}^{(k)}\left(\mathbf{B}_{\Gamma}^{(k)}\right)^T \vect{\lambda} \; \leq \;  {\left(H^{(k)}\right)^{d-2}} \vect{\lambda}^T\mathbf{B}_{\Gamma}^{(k)}\widehat{\mathbf{S}}^{(k)}\left(\mathbf{B}_{\Gamma}^{(k)}\right)^T\vect{\lambda} \; \leq \; s_1^{(k)}\vect{\lambda}^T\mathbf{B}_{\Gamma}^{(k)}\mathbf{S}^{(k)}\left(\mathbf{B}_{\Gamma}^{(k)}\right)^T \vect{\lambda}. 
\end{equation*} 
\end{lemma}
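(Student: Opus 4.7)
The plan is to exploit the energy/minimization characterization of the Schur complements and then push the spectral equivalence of Lemma~\ref{l:le_A0} through them. First I would note that both $\mathbf{A}^{(k)}_{II}$ and $\widehat{\mathbf{A}}^{(k)}_{II}$ are positive definite: zero Dirichlet data on $\partial\Omega^{(k)}$ suppresses all rigid body motions (and, on the parametric side, the componentwise constants that span $\ker(\widehat{\mathbf{A}}^{(k)})$). Therefore
\[
\vect{\mu}^T \mathbf{S}^{(k)} \vect{\mu} \;=\; \min_{\mathbf{u}_I} \begin{bmatrix}\mathbf{u}_I \\ \vect{\mu}\end{bmatrix}^T \mathbf{A}^{(k)} \begin{bmatrix}\mathbf{u}_I \\ \vect{\mu}\end{bmatrix}, \qquad \vect{\mu}^T \widehat{\mathbf{S}}^{(k)} \vect{\mu} \;=\; \min_{\mathbf{u}_I} \begin{bmatrix}\mathbf{u}_I \\ \vect{\mu}\end{bmatrix}^T \widehat{\mathbf{A}}^{(k)} \begin{bmatrix}\mathbf{u}_I \\ \vect{\mu}\end{bmatrix},
\]
with the minima attained at the respective discrete harmonic extensions. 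Set $\vect{\mu} := (\mathbf{B}^{(k)}_\Gamma)^T \vect{\lambda}$ throughout.

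For the lower bound, let $\widehat{\mathbf{u}}^\ast$ be the $\widehat{\mathbf{A}}^{(k)}$-harmonic extension of $\vect{\mu}$. Since $\widehat{\mathbf{u}}^\ast$ has boundary trace $\vect{\mu}$, it is admissible in the $\mathbf{S}^{(k)}$-minimization, and Lemma~\ref{l:le_A0}, inequality \eqref{eq:le_A0_lower}, gives
\[
(H^{(k)})^{d-2}\vect{\mu}^T \widehat{\mathbf{S}}^{(k)} \vect{\mu} \;=\; (H^{(k)})^{d-2}(\widehat{\mathbf{u}}^\ast)^T \widehat{\mathbf{A}}^{(k)} \widehat{\mathbf{u}}^\ast \;\geq\; \widetilde{a}_0^{(k)}\,(\widehat{\mathbf{u}}^\ast)^T \mathbf{A}^{(k)} \widehat{\mathbf{u}}^\ast \;\geq\; \widetilde{a}_0^{(k)}\,\vect{\mu}^T \mathbf{S}^{(k)} \vect{\mu},
\]
so one can take $s_0^{(k)} := \widetilde{a}_0^{(k)}$, which is independent of $h$, $H^{(k)}$ and $p$ by Lemma~\ref{l:le_A0}. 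No use of the hypothesis $\vect{\lambda}\in\ker((\mathbf{G}^{(k)})^T)$ is needed here.

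For the upper bound, the natural candidate is the $\mathbf{A}^{(k)}$-harmonic extension $\mathbf{u}^\ast$ of $\vect{\mu}$, which has boundary trace $\vect{\mu}$ and realises $(\mathbf{u}^\ast)^T \mathbf{A}^{(k)} \mathbf{u}^\ast = \vect{\mu}^T \mathbf{S}^{(k)} \vect{\mu}$; testing $\widehat{\mathbf{S}}^{(k)}$ with it gives
\[
(H^{(k)})^{d-2}\vect{\mu}^T \widehat{\mathbf{S}}^{(k)} \vect{\mu} \;\leq\; (H^{(k)})^{d-2}(\mathbf{u}^\ast)^T \widehat{\mathbf{A}}^{(k)} \mathbf{u}^\ast.
\]
Here is where the hypothesis enters: $\vect{\lambda}\in\ker((\mathbf{G}^{(k)})^T)$ reads $(\mathbf{R}^{(k)})^T(\mathbf{B}^{(k)})^T\vect{\lambda}=\mathbf{0}$, i.e.\ the boundary-supported vector $(\mathbf{B}^{(k)})^T\vect{\lambda}$ is Euclidean-orthogonal to $\ker(\mathbf{A}^{(k)})=\mathrm{range}(\mathbf{R}^{(k)})$. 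The plan is to use this to replace $\mathbf{u}^\ast$ by a vector in $\mathrm{range}(\mathbf{A}^{(k)})$ on which \eqref{eq:le_A0_upper} applies: concretely, decompose $\mathbf{u}^\ast = \mathbf{w}^\ast + \mathbf{r}_0$ orthogonally with $\mathbf{w}^\ast\in\mathrm{range}(\mathbf{A}^{(k)})$ and $\mathbf{r}_0\in\ker(\mathbf{A}^{(k)})$, note that $\mathbf{w}^\ast$ is itself a discrete harmonic extension of $\vect{\mu}-(\mathbf{r}_0)_\Gamma$ with $(\mathbf{w}^\ast)^T\mathbf{A}^{(k)}\mathbf{w}^\ast=(\mathbf{u}^\ast)^T\mathbf{A}^{(k)}\mathbf{u}^\ast$ (because $\mathbf{S}^{(k)}$ annihilates traces of rigid body modes), and then bound
\[
(\mathbf{u}^\ast)^T\widehat{\mathbf{A}}^{(k)}\mathbf{u}^\ast \;\leq\; 2(\mathbf{w}^\ast)^T\widehat{\mathbf{A}}^{(k)}\mathbf{w}^\ast + 2\,\mathbf{r}_0^T\widehat{\mathbf{A}}^{(k)}\mathbf{r}_0,
\]
controlling the first term by Lemma~\ref{l:le_A0_upper} and the second by a shape-only constant, exploiting that $\mathbf{r}_0$ is determined (via its Euclidean components against $\mathbf{R}^{(k)}$) by $\mathbf{u}^\ast$, hence by $\vect{\mu}$, and that the hypothesis on $\vect{\lambda}$ rules out a spurious contribution from the boundary part of $\mathbf{r}_0$.

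The main obstacle I expect is precisely this last step: unlike $\mathbf{A}^{(k)}$, the block-diagonal matrix $\widehat{\mathbf{A}}^{(k)}$ does not vanish on rigid body modes (its kernel is only the componentwise constants on $\widehat\Omega$), so one must carefully estimate $\mathbf{r}_0^T\widehat{\mathbf{A}}^{(k)}\mathbf{r}_0$ in terms of $\vect{\mu}^T\mathbf{S}^{(k)}\vect{\mu}$, using the orthogonality $\vect{\mu}\perp(\mathbf{R}^{(k)}_\Gamma)$ coming from the hypothesis together with the regularity of $\mathcal{F}^{(k)}$ through the shape-invariant constants already appearing in Lemmas \ref{l:le_A0} and \ref{l:le_A}. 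Once this is done, the Korn constant $C^{(k)}_{\mathrm{korn}}$, the Jacobian bounds and $\widetilde{a}_1^{(k)}$ combine into a single $s_1^{(k)}$ that depends only on the shape of $\Omega^{(k)}$ and on the Lamé coefficients, and is independent of $h$, $H^{(k)}$ and $p$, as required.
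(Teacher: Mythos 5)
Your lower bound is complete and is exactly the paper's argument: test the $\mathbf{S}^{(k)}$-minimization with the $\widehat{\mathbf{A}}^{(k)}$-harmonic extension and use \eqref{eq:le_A0_lower}, which holds for all vectors, so $s_0^{(k)}=\tilde a_0^{(k)}$ and indeed no use of the hypothesis on $\vect{\lambda}$ is needed there. The genuine gap is in the upper bound. You stop at a ``plan'': after splitting the elastic harmonic extension as $\mathbf{u}^\ast=\mathbf{w}^\ast+\mathbf{r}_0$ with $\mathbf{r}_0\in\ker(\mathbf{A}^{(k)})$, the whole burden falls on bounding $\mathbf{r}_0^T\widehat{\mathbf{A}}^{(k)}\mathbf{r}_0$ by a shape-only constant times $\vect{\mu}^T\mathbf{S}^{(k)}\vect{\mu}$, and you explicitly leave this open. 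This is not a routine detail: since $\widehat{\mathbf{A}}^{(k)}$ does not annihilate rigid body modes, $\mathbf{r}_0^T\widehat{\mathbf{A}}^{(k)}\mathbf{r}_0$ is of the size of the $H^1$-energy of the rigid part of $\mathbf{u}^\ast$, which has no a priori relation to the elastic energy $\vect{\mu}^T\mathbf{S}^{(k)}\vect{\mu}$; controlling it requires showing that the orthogonality $(\mathbf{R}^{(k)})^T(\mathbf{B}^{(k)})^T\vect{\lambda}=\mathbf{0}$ actually suppresses the rigid component of the minimizer, which is precisely the hard content you have not supplied. The factor-$2$ triangle inequality route therefore manufactures an extra term that the argument then cannot absorb.

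The paper avoids this entirely: it first records the equivalence $\vect{\lambda}\in\mathrm{ker}\left((\mathbf{G}^{(k)})^T\right)\Longleftrightarrow(\mathbf{B}_{\Gamma}^{(k)})^T\vect{\lambda}\in\mathrm{range}(\mathbf{S}^{(k)})$ (via $\mathrm{ker}\left((\mathbf{R}^{(k)})^T\right)=\mathrm{range}(\mathbf{A}^{(k)})$), reducing the claim to the two-sided bound on the Rayleigh quotient of $\widehat{\mathbf{S}}^{(k)}$ versus $\mathbf{S}^{(k)}$ over $\vect{\theta}\in\mathrm{range}(\mathbf{S}^{(k)})$; then, for the upper bound, it uses that for such $\vect{\theta}$ the exact minimizer $\mathbf{v}_{\mathrm{elast}}$ itself lies in $\mathrm{range}(\mathbf{A}^{(k)})$, so \eqref{eq:le_A0_upper} applies \emph{directly} to $\mathbf{v}_{\mathrm{elast}}$ and gives in one line $\left(H^{(k)}\right)^{d-2}\vect{\theta}^T\widehat{\mathbf{S}}^{(k)}\vect{\theta}\le\left(H^{(k)}\right)^{d-2}\mathbf{v}_{\mathrm{elast}}^T\widehat{\mathbf{A}}^{(k)}\mathbf{v}_{\mathrm{elast}}\le\tilde a_1^{(k)}\,\vect{\theta}^T\mathbf{S}^{(k)}\vect{\theta}$, i.e.\ $s_1^{(k)}=\tilde a_1^{(k)}$, with no splitting and no rigid-body remainder. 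To repair your proof you would either have to establish that membership fact (making $\mathbf{r}_0$ irrelevant), or genuinely prove the missing estimate on $\mathbf{r}_0^T\widehat{\mathbf{A}}^{(k)}\mathbf{r}_0$ --- which amounts to the same thing; as written, the upper half of the lemma is not proved.
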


\begin{proof}

We first observe that
\begin{equation} \label{eq:lambda_theta} \vect{\lambda} \in \mathrm{ker}\left((\mathbf{G}^{(k)})^T\right) \Longleftrightarrow \left(\mathbf{B}_{\Gamma}^{(k)}\right)^T \vect{\lambda} \in \mathrm{range}(\mathbf{S}^{(k)}). \end{equation}
Indeed, $ (\mathbf{G}^{(k)})^T \vect{\lambda} = \left(\mathbf{R}^{(k)}\right)^T \left(\mathbf{B}^{(k)}\right)^T  \vect{\lambda}  = 0  $ if and only if $\left(\mathbf{B}^{(k)}\right)^T  \vect{\lambda} \in \mathrm{ker}\left((\mathbf{R}^{(k)})^T\right) = \mathrm{range}(\mathbf{A}^{(k)})$, and the latter is equivalent to $\left(\mathbf{B}_{\Gamma}^{(k)}\right)^T  \vect{\lambda} \in \mathrm{range}(\mathbf{S}^{(k)})$.
In light of \eqref{eq:lambda_theta}, the statement we want to prove is equivalent to 
\begin{equation}
\label{eq:A_S_le}
s_0^{(k)} \; \leq \;  {\left(H^{(k)}\right)^{d-2}} \frac{\vect{\theta}^T \widehat{\mathbf{S}}^{(k)} \vect{\theta}}{\vect{\theta}^T \mathbf{S}^{(k)} \vect{\theta}} \; \leq \; s_1^{(k)} \qquad \forall \  \vect{\theta}\in \mathrm{range}(\mathbf{S}^{(k)}).
\end{equation} 
To this end, referring to~\cite[Section 5.1]{MR1787293} for further details,  we use that 
\begin{equation*}
\vect{\theta}^T \mathbf{S}^{(k)} \vect{\theta} \; = \; \min_{\mathbf{v} \in  \mathbb{R}^{\mathcal{N}^{(k)}_{dof}}, \ \mathbf{v}_{|_{\Gamma}}  =  \vect{\theta}} \mathbf{v}^T \mathbf{A}^{(k)} \mathbf{v} ,  
\qquad
 \vect{\theta}^T\widehat{\mathbf{S}}^{(k)}  \vect{\theta} \; =     \; \min_{\mathbf{v} \in  \mathbb{R}^{\mathcal{N}^{(k)}_{dof}}, \ \mathbf{v}_{|_{\Gamma}} = \vect{\theta}} \mathbf{v}^T \widehat{\mathbf{A}}^{(k)} \mathbf{v},
\end{equation*}
  where $\mathbf{v}_{|_{\Gamma}}$ refers to the   degrees-of-freedom of $\mathbf{v}$ that belong to the  interface $\Gamma$.  
Thus, in particular 
\begin{equation*}
   {\left(H^{(k)}\right)^{d-2}} \frac{\vect{\theta}^T \widehat{\mathbf{S}}^{(k)}  \vect{\theta}}{\vect{\theta}^T \mathbf{S}^{(k)} \vect{\theta}}
\; = \;   {\left(H^{(k)}\right)^{d-2}}    \frac{ \displaystyle   \min_{\mathbf{v} \in \mathbb{R}^{\mathcal{N}^{(k)}_{dof}}, \ \mathbf{v} |_{\Gamma} = \vect{\theta}} \mathbf{v}^T\widehat{\mathbf{A}}^{(k)}  \mathbf{v}}{ \displaystyle \min_{\mathbf{v} \in  \mathbb{R}^{\mathcal{N}^{(k)}_{dof}}, \ \mathbf{v} |_{\Gamma} = \vect{\theta}} \mathbf{v}^T \mathbf{A}^{(k)} \mathbf{v}}.
\end{equation*}

Since inequality \eqref{eq:le_A0_lower} holds for every vector belonging to  $\mathbb{R}^{\mathcal{N}^{(k)}_{dof}}$, we infer that 
\begin{equation*} 
  \left( H^{(k)}\right)^{d-2}  \min_{\mathbf{v} \in \mathbb{R}^{\mathcal{N}^{(k)}_{dof}}, \ \mathbf{v}_{|_{\Gamma}} = \vect{\theta}} \mathbf{v}^T\widehat{\mathbf{A}}^{(k)}  \mathbf{v} \; \geq \; \tilde{a}_0^{(k)} \min_{\mathbf{v} \in  \mathbb{R}^{\mathcal{N}^{(k)}_{dof}}, \ \mathbf{v}_{|_{\Gamma}} = \vect{\theta}} \mathbf{v}^T \mathbf{A}^{(k)} \mathbf{v}, 
\end{equation*}
and the left inequality of \eqref{eq:A_S_le} holds by taking $s^{(k)}_0 := \tilde{a}^{(k)}_0$.

Now, if we define $ \mathbf{v}_{\mathrm{elast}} := \underset{\mathbf{v} \in \mathbb{R}^{\mathcal{N}^{(k)}_{dof}}, \ \mathbf{v}_{|_{\Gamma}} = \vect{\theta}} {\mathrm{argmin}}\mathbf{v}^T \mathbf{A}^{(k)}  \mathbf{v} $,  it can be shown that since $\vect{\theta} \in \mathrm{range}(\mathbf{S}^{(k)}) $, then $\mathbf{v}_{\mathrm{elast}}\in  \mathrm{range}(\mathbf{A}^{(k)})$. Thus, 
\begin{align*}
  \left( H^{(k)}\right)^{d-2}    \frac{ \displaystyle  \min_{\mathbf{v} \in \mathbb{R}^{\mathcal{N}^{(k)}_{dof}}, \ \mathbf{v}_{|_{\Gamma}} = \vect{\theta}} \mathbf{v}^T\widehat{\mathbf{A}}^{(k)}  \mathbf{v}}{ \displaystyle \min_{\mathbf{v} \in  \mathbb{R}^{\mathcal{N}^{(k)}_{dof}}, \ \mathbf{v}_{|_{\Gamma}} = \vect{\theta}} \mathbf{v}^T \mathbf{A}^{(k)} \mathbf{v}} 
\;&  = \;
  \left( H^{(k)}\right)^{d-2}    \frac{ \displaystyle  \min_{\mathbf{v} \in \mathbb{R}^{\mathcal{N}^{(k)}_{dof}}, \ \mathbf{v}_{|_{\Gamma}} = \vect{\theta}} \mathbf{v}^T\widehat{\mathbf{A}}^{(k)}  \mathbf{v}}{ \mathbf{v_{\mathrm{elast}}}^T \mathbf{A}^{(k)} \mathbf{v_{\mathrm{elast}}}} 
\; \\ & \leq \; \left( H^{(k)}\right)^{d-2}  
\frac{ \mathbf{v_{\mathrm{elast}}}^T  \widehat{\mathbf{A}}^{(k)} \mathbf{v_{\mathrm{elast}}} }{ \mathbf{v}_{\mathrm{elast}}^T \mathbf{A}^{(k)} \mathbf{v}_{\mathrm{elast}}} 
\;\leq \; 
\tilde{a}^{(k)}_1, 
\end{align*}
where the last inequality follows from \eqref{eq:le_A0_upper}. Thus, the right inequality of \eqref{eq:A_S_le} holds by taking $s^{(k)}_1 := \tilde{a}^{(k)}_1$.
\end{proof}

We are finally ready to prove the main result of this Section.
\begin{prop}
\label{thm:main} 
  The spectral estimates  \eqref{eq:bound_A} and \eqref{eq:bound_S}
  are satisfied for the choice $\mathbf{P}_A=
  \mathbf{P}_A^{\mathrm{inex}}$ and $\mathbf{P}_S=   \mathbf{P}_S^{\mathrm{inex}}$  defined in \eqref{eq:inexact_AP} and  \eqref{eq:inexact_SP}, respectively, with constants $a_0, a_1, s_0,$ and $s_1$ that are independent of $h, H$ and $p$. 
\end{prop}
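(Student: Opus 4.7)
The plan is to assemble the global spectral estimates from the patch-wise estimates of Lemmas~\ref{l:le_A} and~\ref{l:Schur_estim_le}, using that all the matrices involved ($\mathbf{A}$, $\mathbf{M}$, $\mathbf{D}_H$, $\mathbf{R}$, $\mathbf{P}_A^{\mathrm{inex}}$, $\mathbf{P}_S^{\mathrm{inex}}$) are block-diagonal with blocks indexed by the patches. The key structural observation is therefore that the two quadratic forms on both sides of \eqref{eq:bound_A} and \eqref{eq:bound_S} split as sums over $k=1,\dots,\numpatch$, so that a uniform bound follows by applying the local lemmas block by block and then taking the minimum/maximum of the resulting constants over the (finitely many) patches.

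For the bound \eqref{eq:bound_A}, I would first note that since $\mathbf{A}$ is block-diagonal one has $\mathrm{range}(\mathbf{A}) = \prod_k \mathrm{range}(\mathbf{A}^{(k)})$, so any $\mathbf{u}\in\mathrm{range}(\mathbf{A})$ decomposes into blocks $\mathbf{u}^{(k)}\in\mathrm{range}(\mathbf{A}^{(k)})$. Then I would expand
\begin{equation*}
\mathbf{u}^T \mathbf{P}_A^{\mathrm{inex}} \mathbf{u} \;=\; \sum_{k=1}^{\numpatch} (H^{(k)})^{d-2}\,[\mathbf{u}^{(k)}]^T\bigl(\widehat{\mathbf{A}}^{(k)} + \widehat{\mathbf{M}}^{(k)}\bigr)\mathbf{u}^{(k)},
\end{equation*}
and, observing that $\mathbf{D}_H$ acts as $(H^{(k)})^{-2}\mathbf{I}$ on the $k$-th block,
\begin{equation*}
\mathbf{u}^T (\mathbf{A} + \mathbf{D}_H \mathbf{M})\mathbf{u} \;=\; \sum_{k=1}^{\numpatch} [\mathbf{u}^{(k)}]^T\bigl(\mathbf{A}^{(k)} + (H^{(k)})^{-2}\mathbf{M}^{(k)}\bigr)\mathbf{u}^{(k)}.
\end{equation*}
Term by term, Lemma~\ref{l:le_A} yields the equivalence with constants $a_0^{(k)}$, $a_1^{(k)}$ that, by inspection of the proofs of Lemmas~\ref{l:le_A0} and~\ref{l:le_A}, depend only on the shape of $\Omega^{(k)}$ (i.e. on $(H^{(k)})^{-1}J_{\mathcal{F}^{(k)}}$ and on the Korn constant of $\Omega^{(k)}$) and on the Lam\'e coefficients, hence are independent of $h^{(k)}$, $H^{(k)}$ and $p$. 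Setting $a_0 := \min_k a_0^{(k)}$ and $a_1 := \max_k a_1^{(k)}$ gives~\eqref{eq:bound_A}.

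For the bound \eqref{eq:bound_S}, the analogous splitting gives
\begin{equation*}
\vect{\lambda}^T \mathbf{P}_S^{\mathrm{inex}} \vect{\lambda} \;=\; \sum_{k=1}^{\numpatch} (H^{(k)})^{d-2}\,\vect{\lambda}^T \mathbf{B}_\Gamma^{(k)} \widehat{\mathbf{S}}^{(k)} (\mathbf{B}_\Gamma^{(k)})^T \vect{\lambda},\qquad \vect{\lambda}^T \mathbf{B}_\Gamma \mathbf{S}\mathbf{B}_\Gamma^T \vect{\lambda} \;=\; \sum_{k=1}^{\numpatch}\vect{\lambda}^T \mathbf{B}_\Gamma^{(k)} \mathbf{S}^{(k)} (\mathbf{B}_\Gamma^{(k)})^T \vect{\lambda}.
\end{equation*}
To apply Lemma~\ref{l:Schur_estim_le} in each summand I need to verify that $\vect{\lambda}\in\mathrm{ker}(\mathbf{G}^T)$ implies $\vect{\lambda}\in\mathrm{ker}((\mathbf{G}^{(k)})^T)$ for every $k$. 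This is immediate from the block-diagonality of $\mathbf{R}$: writing $\mathbf{G} = [\mathbf{G}^{(1)}\,|\,\cdots\,|\,\mathbf{G}^{(\numpatch)}]$ as a horizontal concatenation, the condition $\mathbf{G}^T \vect{\lambda}=\mathbf{0}$ is equivalent to $(\mathbf{G}^{(k)})^T \vect{\lambda}=\mathbf{0}$ for all $k$. Lemma~\ref{l:Schur_estim_le} then provides the equivalence summand by summand with patch-wise constants $s_0^{(k)}, s_1^{(k)}$ independent of $h$, $H$, $p$, and setting $s_0 := \min_k s_0^{(k)}$, $s_1 := \max_k s_1^{(k)}$ concludes~\eqref{eq:bound_S}.

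The argument is essentially a mechanical assembly, so there is no genuine analytical obstacle; the only point requiring some care is the kernel compatibility observation above, namely that the \emph{global} constraint $\vect{\lambda}\in\mathrm{ker}(\mathbf{G}^T)$ is precisely the conjunction of the \emph{local} constraints used in Lemma~\ref{l:Schur_estim_le}. Uniformity of the final constants with respect to $H$ follows because, by Assumption~\ref{ass:quasi-uniformity}, the rescaled parametrizations $(H^{(k)})^{-1}\mathcal{F}^{(k)}$ range in a bounded family; $p$-independence is inherited from the patch-wise lemmas, which nowhere invoke inverse inequalities.
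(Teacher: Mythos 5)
Your proof is correct and follows essentially the same route as the paper: the paper's own proof simply invokes Lemma~\ref{l:le_A} and Lemma~\ref{l:Schur_estim_le} and sets $a_0:=\min_k a_0^{(k)}$, $a_1:=\max_k a_1^{(k)}$, $s_0:=\min_k s_0^{(k)}$, $s_1:=\max_k s_1^{(k)}$, exactly as you do. The only difference is that you spell out the block-diagonal splitting of the quadratic forms and the compatibility $\mathrm{ker}(\mathbf{G}^T)=\bigcap_k\mathrm{ker}((\mathbf{G}^{(k)})^T)$, details the paper leaves implicit under ``immediately follows''.
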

\begin{proof}
The result immediately follows from  
Lemma \ref{l:le_A} and Lemma \ref{l:Schur_estim_le}  
by taking
$$ a_0 := \min_{k=1,\dots, \numpatch} \left\{a_0^{(k)}\right\}, \quad a_1 := \max_{k=1,\dots, \numpatch} \left\{a_1^{(k)}\right\},$$
$$  s_0 := \min_{k=1,\dots, \numpatch} \left\{s_0^{(k)}\right\}, \quad s_1 := \max_{k=1,\dots, \numpatch} \left\{s_1^{(k)}\right\}.$$
\end{proof}

\subsubsection{Inclusion of the geometry information }
\label{sec:geo_inclusion}
 We have shown in Section~\ref{sec:specest}
that the 
spectral estimates for the local preconditioned problems do not depend    on $h, H$  and on $p$. 
 However, they depend on the geometry parametrizations   $\mathcal{F}^{(k)}$.
 We propose a strategy that allows to include in   $\widehat{\mathbf{P}}_A^{(k)}$ and  $\widehat{\mathbf{P}}_S^{(k)}$ some information about the parametrization maps, while keeping the Kronecker structure of the local solvers. 
This strategy, which we briefly present below, is made of two steps:
an approximation of the coefficients and a diagonal scaling.   For more
details, see e.g. \cite[Appendix C]{montardini2018space}. 

 We consider the matrices on the diagonal blocks of the system matrix \eqref{eq:le_local_matrix}, and, referring to Section \ref{sec:preliminaries} for the notation of the basis functions, we rewrite their entries as integrals on the parametric domain $\widehat{\Omega}$ for $k=1,\dots,\numpatch$ and for $l=1,\dots,d$   as
\begin{align*}
[\mathbf{A}^{(k)}_{l,l}]_{i,j}& = 2 \mu \int_{{\Omega}^{(k)}} \varepsilon( \mathbf{e}_l{B}^{(k)}_{i,p}) : \varepsilon( \mathbf{e}_l{B}^{(k)}_{j,p})\d\vect{x} + \lambda \int_{{\Omega}^{(k)}} \nabla \cdot \left(\mathbf{e}_l{B}^{(k)}_{i,p} \right) \nabla \cdot \left( \mathbf{e}_l{B}^{(k)}_{j,p}\right) \d \vect{x} \\
 &  =  \int_{\widehat{\Omega}} \left(\nabla \widehat{B}^{(k)}_{i,p}\right)^T\ \mathfrak{C}^{(k)}_l  \nabla  \widehat{B}^{(k)}_{j,p}\d\widehat{\vect{x}}   \quad \text{for } i,j=1,\dots, n^{(k)},
\end{align*}
 where for $k=1,\dots,\numpatch$ and for $l=1,\dots,d$
\begin{equation*}
\mathfrak{C}^{(k)}_l=  \left[ \mu J_{\mathcal{F}^{(k)}}^{-1}J_{\mathcal{F}^{(k)}}^{-T} + (\mu+\lambda) J_{\mathcal{F}^{(k)}}^{-1}\mathbf{e}_l\mathbf{e}_l^TJ_{\mathcal{F}^{(k)}}^{-T}  \right]|\det(J_{\mathcal{F}^{(k)}})|.
\end{equation*} 
We then approximate each diagonal entry of  the matrices $\mathfrak{C}^{(k)}_l$ for $k=1,\dots, \numpatch$ and  $l=1,\dots, d$ as
\begin{equation*}
[ \mathfrak{C}_l^{(k)}(\vect{\eta})]_{i,i}^{(k)}\approx [ \widetilde{\mathfrak{C}}^{(k)}_l(\vect{\eta})]_{i,i}:= \beta_{l,1}^{(k)}(\eta_1)\dots \beta^{(k)}_{l,i-1}(\eta_{i-1})\nu^{(k)}_{l,i} (\eta_i)\beta^{(k)}_{l,i+1}(\eta _{i+1})\dots\beta^{(k)}_{l,d}(\eta_d)\ \ \ \mathrm{ for \ }  i = 1,\dots,d.
\end{equation*}
The approximation, which is performed using a straightforward variant
of the algorithm proposed in \cite{Wachspress1984,Diliberto1951} that
can be found in   \cite[Appendix C]{montardini2018space}, is computed
directly at the quadrature points.   The cost of this operation is
therefore proportional to the number of quadrature points: if we
assume that the number of elements is the same in each parametric
direction and that it is equal to $n_{el}^{(k)}$, then, using standard
Gauss quadrature rules, the approximation  cost is
$O(n_{el}^{(k)}p^d)$ FLOPs.  We could   reduce the  cost of this
procedure by computing the approximation on a coarser grid or adopting
a more efficient quadrature scheme.    Then we define for $k=1,\dots,\numpatch$ and for $l=1,\dots,d$
$$
[\widetilde{\mathbf{A}}^{(k)}_{l}]_{i,j} : =  \int_{\widehat{\Omega}} \left(\nabla \widehat{B}^{(k)}_{i,p}\right)^T\ \widetilde{\mathfrak{C}}^{(k)}_l  \nabla  \widehat{B}^{(k)}_{j,p}\d\widehat{\vect{x}}  \quad \text{ for } i,j=1,\dots,n^{(k)}
$$
and also 
$\widetilde{\mathbf{A}}^{(k)}:=\begin{bmatrix}
\widetilde{\mathbf{A}}^{(k)}_{1} & &\\
& \ddots  \\
& & \widetilde{\mathbf{A}}^{(k)}_{d}
\end{bmatrix}$ for $k=1,\dots,\numpatch$. 
We remark that the matrices $\widetilde{\mathbf{A}}^{(k)}_l$ maintain the same Kronecker structure as the matrices $\widehat{\mathbf{A}}^{(k)}_l$ in \eqref{eq:prec_inex_blocks} for $k=1,\dots, \numpatch$ and for $l=1,\dots,d$. In particular, for $d=3$ and $k=1,\dots,\numpatch$ we have 
\begin{align*}
\mathbf{\widetilde{A}}^{(k)}_{1} & :=    {\widetilde{K}}^{(k)}_{1,3} \otimes  {\widetilde{M}}_{1,2}^{(k)} \otimes  {\widetilde{M}}^{(k)}_{1,1}
 +   {\widetilde{M}}^{(k)}_{1,3} \otimes \ {\widetilde{K}}_{1,2}^{(k)} \otimes  {\widetilde{M}}^{(k)}_{1,1}  +    {\widetilde{M}}^{(k)}_{1,3} \otimes  {\widetilde{M}}^{(k)}_{1,2} \otimes  {\widetilde{K}}^{(k)}_{1,1}, \\
  \mathbf{\widetilde{A}}^{(k)}_{2} & :=    {\widetilde{K}}^{(k)}_{2,3} \otimes {\widetilde{M}}_{2,2}^{(k)} \otimes  {\widetilde{M}}^{(k)}_{2,1}
 +   {\widetilde{M}}^{(k)}_{2,3} \otimes  {\widetilde{K}}_{2,2}^{(k)} \otimes {\widetilde{M}}^{(k)}_{2,1} +   {\widetilde{M}}^{(k)}_{2,3} \otimes  {\widetilde{M}}^{(k)}_{2,2} \otimes  {\widetilde{K}}^{(k)}_{2,1}, \\
 \mathbf{\widetilde{A}}^{(k)}_{3} & :=    {\widetilde{K}}^{(k)}_{3,3} \otimes {\widetilde{M}}_{3,2}^{(k)} \otimes {\widetilde{M}}^{(k)}_{3,1}
 +   {\widetilde{M}}^{(k)}_{3,3} \otimes  {\widetilde{K}}_{3,2}^{(k)} \otimes {\widetilde{M}}^{(k)}_{3,1}   +    {\widetilde{M}}^{(k)}_{3,3} \otimes {\widetilde{M}}^{(k)}_{3,2} \otimes  {\widetilde{K}}^{(k)}_{3,1},
\end{align*}
where  for  $i,j=1,\dots, m_{l}^{(k)}$ and for $l,m=1,\dots, d$
\begin{equation*}
[\widetilde{{K}}_{l,m}^{(k)}]_{i,j}:=\int_{0}^1\nu_{l,m}^{(k)}(\eta_m)\widehat{b}_{i,p}'(\eta_m)\widehat{b}_{j,p}'(\eta_m)\d\eta_m, \quad 
[\widetilde{{M}}_{l,m}^{(k)}]_{i,j}:=\int_{0}^1\beta_{l,m}^{(k)}(\eta_m)\widehat{b}_{i,p}(\eta_m)\widehat{b}_{j,p}(\eta_m)\d\eta_m.
\end{equation*}

We also need to define  modified mass matrices  
$$
\widetilde{\mathbf{M}}^{(k)}:=\begin{bmatrix}
\widetilde{\mathbf{M}}^{(k)}_{1} & &\\
& \ddots  \\
& & \widetilde{\mathbf{M}}^{(k)}_{d}
\end{bmatrix}\qquad  \text{for } k=1,\dots,\numpatch,
$$
where, for $l=1,\dots,d$ we defined
$$
\widetilde{\mathbf{M}}_l^{(k)}:=\widetilde{M}_{l,d}^{(k)}\otimes\dots \otimes\widetilde{M}_{l,1}^{(k)}.
$$ 
We then define  for $l=1,\dots,d$ and for $k=1,\dots,\numpatch$ the Schur complement matrices  of $\mathbf{\widetilde{A}}_{l}^{(k)}$, obtained by eliminating the internal degrees-of-freedom,  as \[ \mathbf{\widetilde{S}}_{l}^{(k)}:=\left(\mathbf{\widetilde{A}}_l^{(k)}\right)_{\Gamma \Gamma}-\left(\mathbf{\widetilde{A}}_l^{(k)}\right)_{\Gamma I}\left[\left(\mathbf{\widetilde{A}}_l^{(k)}\right)_{I I}\right]^{-1}\left(\mathbf{\widetilde{A}}_l^{(k)}\right)_{I \Gamma }.\]
Finally, we  define for   $k=1,\dots,\numpatch$
 
\[ 
\widetilde{\mathbf{P}}_A^{\mathrm{inex},(k)}:=  \begin{bmatrix}
D_{A,1}^{\tfrac12}\left( \mathbf{\widetilde{A}}_1^{(k)}+
  {\left(H^{(k)}\right)^{-2}}  \, \mathbf{\widetilde{M}}_1^{(k)}\right)D_{A,1}^{\tfrac12} & &\\
   & \ddots &\\
   &  & D_{A,d}^{\tfrac12}\left(\mathbf{\widetilde{A}}_d^{(k)}+
  {\left(H^{(k)}\right)^{-2}}  \, \mathbf{\widetilde{M}}_d^{(k)}\right)D_{A,d}^{\tfrac12}
\end{bmatrix}\]
and 
\begin{equation}
\label{eq:tildeshur}
\widetilde{\mathbf{S}}^{(k)} :=  \begin{bmatrix}
D_{S,1}^{\tfrac12}\mathbf{\widetilde{S}}_1^{(k)}D_{S,1}^{\tfrac12}  & &\\
   & \ddots &\\
   &  & D_{S,d}^{\tfrac12}\mathbf{\widetilde{S}}_d^{(k)}D_{S,d}^{\tfrac12} 
\end{bmatrix} 
\end{equation}

%
%
%
%
%
%
where $D_{A,l}$ and $D_{S,l}$ are diagonal scaling matrices  defined  for $l=1,\dots,d$  as
$$
[D_{A,l}]_{i,i}:=\frac{\left[\mathbf{ {A}}_l^{(k)}+
  {\left(H^{(k)}\right)^{-2}}  \, \mathbf{{M}}_l^{(k)}\right]_{i,i}}{[\mathbf{\widetilde{A}}_l^{(k)}+
  {\left(H^{(k)}\right)^{-2}}  \, \mathbf{\widetilde{M}}_l^{(k)}]_{i,i}} \quad \text{for } i=1,\dots,\mathcal{N}^{(k)}_{dof}
$$
and 
$$ [D_{S,l}]_{i,i}:=\frac{\left[\left(\mathbf{A}_l^{(k)}\right)_{\Gamma\Gamma}\right]_{i,i}}{\left[\left(\mathbf{\widetilde{A}}_l^{(k)}\right)_{\Gamma\Gamma}\right]_{i,i}} \quad \text{for } i=1,\dots,\mathcal{N}_{\Gamma}^{(k)}.
$$
Our choice for the modified inexact versions of $\mathbf{P}_A$ and $\mathbf{P}_S$ is therefore
\begin{align}
\widetilde{\mathbf{P}}_A^{\mathrm{inex}}& :=\begin{bmatrix}
 \widetilde{\mathbf{P}}_{A}^{\mathrm{inex},(1)}  & & \\
  & \ddots & \\
    & &  \widetilde{\mathbf{{P}}}_{A}^{\mathrm{inex},(\numpatch)} 
 \end{bmatrix}, \label{eq:geo_A} \\ 
 \widetilde{\mathbf{P}}_S^{\mathrm{inex}}& := \begin{bmatrix}
   \mathbf{B}_{\Gamma}^{(1)} \mathbf{\widetilde{S}}^{(1)}\left(\mathbf{B}^{(1)}_{\Gamma}\right)^T & & \\
  & \ddots & \\
    & &    \mathbf{B}_{\Gamma}^{(\numpatch)}\mathbf{\widetilde{S}}^{(\numpatch)}\left( \mathbf{B}_{\Gamma}^{(\numpatch)}\right)^T
 \end{bmatrix},\label{eq:geo_S} 
\end{align}  
%
%
%
%
%
 Due to the partial inclusion of the geometry information,  the scaling with respect to $H^{(k)} $  in  \eqref{eq:geo_A}  and \eqref{eq:geo_S}  is the same as
for the exact preconditioners \eqref{eq:exact_AP}--\eqref{eq:exact_SP}
and differs from \eqref{eq:inexact_AP} and \eqref{eq:inexact_SP}.  
  
\begin{rmk}
 
\label{rmk:schur_prec}
Following \cite{MR1787293}, we recall that another option for   $\mathbf{P}_S$ that is suited for the non-redundant choice of the Lagrange multipliers is  
\begin{align}
\label{eq:S_non_r}  
   \mathbf{P}_{S}^{\mathrm{ex-nr}}& = (\mathbf{B}_{\Gamma}\mathbf{B}_{\Gamma}^T)^{-1}\mathbf{B}_{\Gamma} {\mathbf{S}}\mathbf{B}^T_{\Gamma}(\mathbf{B}_{\Gamma}\mathbf{B}_{\Gamma}^T)^{-1},  
\end{align}  
instead of \eqref{eq:exact_SP}.
The corresponding inexact choice is obtained by replacing $\mathbf{S}$ with either $\widehat{\mathbf{S}}$ or $\widetilde{\mathbf{S}}$. In particular, the geometry inclusion variant takes the form

\begin{align}
\label{eq:inexact_S_non_r}   
   \widetilde{\mathbf{P}}_{S}^{\mathrm{inex-nr}}& = (\mathbf{B}_{\Gamma}\mathbf{B}_{\Gamma}^T)^{-1}\mathbf{B}_{\Gamma} \widetilde{\mathbf{S}}\mathbf{B}^T_{\Gamma}(\mathbf{B}_{\Gamma}\mathbf{B}_{\Gamma}^T)^{-1},  
\end{align}
where, recalling \eqref{eq:tildeshur}, the matrix $\widetilde{\mathbf{S}}$ is defined as
$$
\widetilde{\mathbf{S}} :=\begin{bmatrix}
\widetilde{\mathbf{S}}^{(1)}& & \\
& \ddots & \\
& & \widetilde{\mathbf{S}}^{(\numpatch)}
\end{bmatrix}.
$$ 
Note that the matrix $\mathbf{B}_{\Gamma}\mathbf{B}_{\Gamma}^T$ is a block-diagonal matrix and its inverse can be easily computed.
\end{rmk}


\section{Numerical results}
\label{sec:numerics}

In this section we assess the performance of the preconditioning strategies.
We compare the  exact and inexact  local solvers    introduced in Section~\ref{sec:exact} and Section~\ref{sec:inexact}, respectively, in three dimensional domains. 
We    use the version of the inexact  local solvers   that incorporates some information on the geometry parametrization, that is detailed in Section~\ref{sec:geo_inclusion}. We show the results only for the choices of $\mathbf{P}_S$ 	\eqref{eq:S_non_r} and \eqref{eq:inexact_S_non_r}, for exact and inexact   local solvers,   respectively, because we experimented that   they provide better performances than \eqref{eq:exact_SP} and \eqref{eq:inexact_SP}, respectively.
We report in the tables below the computational time in seconds needed to solve the preconditioned system. For the inexact   local solvers,   the time   includes also the setup time for   the FD method. For the exact   local solvers,   we exclude the time of formation of the mass matrix $\mathbf{M}$. Indeed,  as it is known, the formation of isogeometric matrices is quite expensive unless ad-hoc routines are used (e.g. weighted quadrature  \cite{MR3811621} or low-rank approach \cite{Mantzaflaris2017}) and in this paper we only focus on the solver.
  In the tables, ``EXACT'' refers to the   choice $\mathbf{P}_A= \mathbf{P}_A^{\mathrm{ex}}$   and $\mathbf{P}_S= \mathbf{P}_{S}^{\mathrm{ex-nr}}$ (see \eqref{eq:exact_AP}  and  \eqref{eq:S_non_r}) while ``INEXACT'' refers to the choice $\mathbf{P}_A= \widetilde{\mathbf{P}}_A^{\mathrm{inex}}$   and $\mathbf{P}_S= \widetilde{\mathbf{P}}_{S}^{\mathrm{inex-nr}}$ (see\eqref{eq:geo_A} and \eqref{eq:inexact_S_non_r}).  
In all our tests we set the Lam\'{e} coefficients equal to $\lambda = \frac{0.3}{0.52}$ and $\mu = \frac{1}{2.6}$;   that  choice  models  steel.  

All experiments are performed by Matlab R2017b and using the GeoPDEs toolbox~\cite{vazquez2016new}, on an Intel Core i7-5820K processor, running at 3.30 GHz, with 64 GB of RAM. 
We force a single-core sequential execution in all our tests. 
Our linear solver is the preconditioned MINRES method with the zero vector as initial guess and with tolerance set equal to $10^{-8}$. The generalized eigendecompositions are computed  using \texttt{eig} Matlab function.

  We recall that $p$ stands for the degree of   the splines, $\numpatch$ is for the number of patches, and $n_{el}^{(k)}$ represents  the number of elements in each parametric direction on a given patch $\Omega^{(k)}$.



\begin{figure}[!ht]
\centering
 \subfloat[Parallelepiped with two levels of refinements
 \label{fig:poisson_3d}]{\resizebox {0.64\columnwidth} {!}{
	 \begin{tikzpicture}	 
	 \draw (0,0) -- (1.5,0) -- (1.5,0.5) -- (0,0.5) -- (0,0);
	 \draw (0.5,0) -- (0.5,0.5);
	 \draw (1,0) -- (1,0.5);
	 \draw (0,0.5) -- (0.05,0.65) -- (1.55,0.65) -- (1.5,0.5);
	 \draw[fill] (1.55,0.65) -- (1.55,0.15) -- (1.505,0) --(1.5,0) -- (1.5,0.5) -- (1.55,0.65);
	 \draw (0.5,0.5) -- (0.55,0.65);
	 \draw (1,0.5) -- (1.05,0.65);
	 
	 \draw[line width=0.2] (0.25,0) -- (0.25,0.5);
	 \draw[line width=0.2] (0.75,0) -- (0.75,0.5);
	 \draw[line width=0.2] (1.25,0) -- (1.25,0.5);
	 \draw[line width=0.2] (0.625,0) -- (0.625,0.5);
	 \draw[line width=0.2] (0.875,0) -- (0.875,0.5);
	 \draw[line width=0.2] (1.125,0) -- (1.125,0.5);
	 \draw[line width=0.2] (1.375,0) -- (1.375,0.5);
	 \draw[line width=0.2] (1.0625,0) -- (1.0625,0.5);
	 \draw[line width=0.2] (1.1875,0) -- (1.1875,0.5);
	 \draw[line width=0.2] (1.3125,0) -- (1.3125,0.5);
	 \draw[line width=0.2] (1.4375,0) -- (1.4375,0.5);	 
	 
	 \draw[line width=0.2] (0,0.25) -- (1.5,0.25);
	 \draw[line width=0.2] (0.5,0.375) -- (1.5,0.375);
	 \draw[line width=0.2] (0.5,0.125) -- (1.5,0.125);
	 \draw[line width=0.2] (1,0.0625) -- (1.5,0.0625);
	 \draw[line width=0.2] (1,0.1875) -- (1.5,0.1875);
	 \draw[line width=0.2] (1,0.3125) -- (1.5,0.3125);
	 \draw[line width=0.2] (1,0.4375) -- (1.5,0.4375);
	 
	 \draw[line width=0.2] (0.025,0.575) -- (1.525,0.575);
	 \draw[line width=0.2] (0.52,0.5425) -- (1.52,0.5425);
	 \draw[line width=0.2] (0.53,0.6125) -- (1.53,0.6125);
	 \draw[line width=0.2] (1.025,0.56) -- (1.525,0.56);
	 \draw[line width=0.2] (1.005,0.525) -- (1.505,0.525);
	 \draw[line width=0.2] (1.035,0.63) -- (1.535,0.63);
	 \draw[line width=0.2] (1.025,0.595) -- (1.525,0.595);
	 \draw[line width=0.2] (0.25,0.5) -- (0.3,0.65);
	 \draw[line width=0.2] (0.75,0.5) -- (0.8,0.65);
	 \draw[line width=0.2] (0.625,0.5) -- (0.675,0.65);
	 \draw[line width=0.2] (0.875,0.5) -- (0.925,0.65);
	 \draw[line width=0.2] (1.25,0.5) -- (1.3,0.65);
	 \draw[line width=0.2] (1.125,0.5) -- (1.175,0.65);
	 \draw[line width=0.2] (1.375,0.5) -- (1.425,0.65);
	 \draw[line width=0.2] (1.1875,0.5) -- (1.2375,0.65); 
	 \draw[line width=0.2] (1.06,0.5) -- (1.11,0.65); 
	 \draw[line width=0.2] (1.4375,0.5) -- (1.4875,0.65); 
	 \draw[line width=0.2] (1.31,0.5) -- (1.36,0.65); 
	 	 \end{tikzpicture}
}} \hfill
 \subfloat[Sphere domain
\label{fig:sphere}]{\resizebox {0.35\columnwidth} {!}{
    \includegraphics{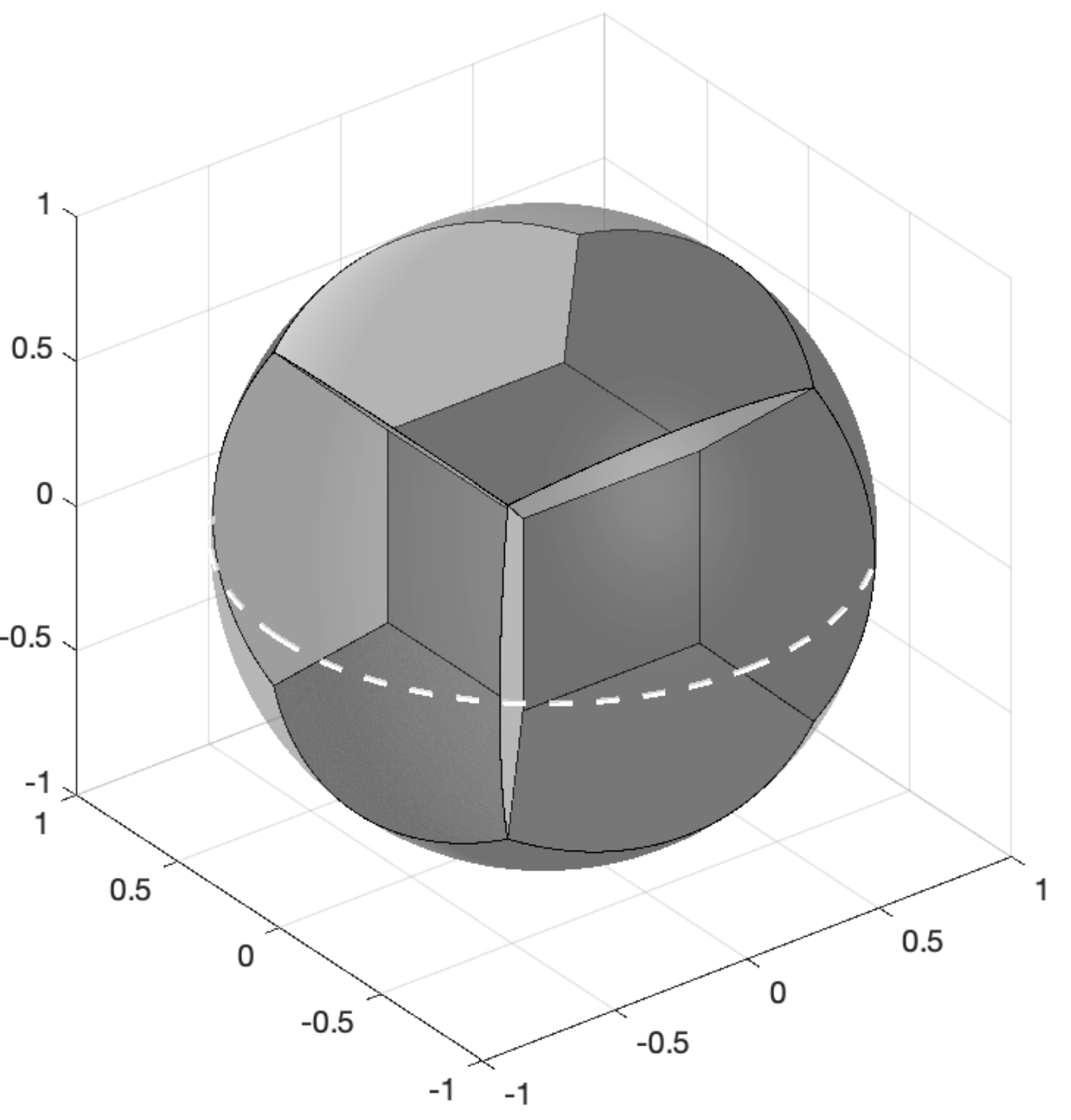}
 }} \hfill

\caption{Computational domains}
\label{fig:non_conforming_domains}
 \end{figure}
 
\paragraph{Non-conforming test.}  
We focus on the case in which the knot vectors of one patch are a refinement of the knot vectors of the adjacent one: this allows the use of the properties of the knot insertion algorithm to glue the degrees-of-freedom of the interface and properly modify the gluing matrix $\mathbf{B}$  (see~\cite[Section 5.2]{piegl2012nurbs}).   

Let $\Omega = (0,1) \times (0,3) \times (0,1)$ be a domain on which we consider a Dirichlet boundary value problem. We choose the problem data so that the exact solution is $\boldsymbol{u} (x, y, z) = [\cos{x},\ z\sin{y},\ (xyz)^2]^T$. The domain is divided into three patches, with increasing levels of refinement (see Figure~\ref{fig:poisson_3d}).  In particular, we set $n_{el}^{(1)}=2^2$ in the leftmost patch, $n_{el}^{(2)}=2^3$ in the central patch, and $n_{el}^{(3)}=2^4$ in the rightmost one. 
%
%

\begin{table}[!h]
\caption{Preconditioners comparison - non-conforming test on the parallelepiped domain} 
\label{tab:poisson_3d}
\begin{center}
\begin{adjustbox}{max width=\textwidth}
 \begin{tabular}{c |  c c |  c c }
   & \multicolumn{2}{c |}{\textbf{EXACT}} & \multicolumn{2}{c}{\textbf{INEXACT}}  \\
 
$ {p}$  & Iter. &  {Time (s)} &   Iter. &  {Time (s)}   \\
  \hline
		  {1} &  53  &  \z\z119 &  \z86 & \z2   \\
		 {2} &  57  &  \z\z283   & \z96 &  \z3   \\
		 {3} &  63  &  1 080    & 104 &  \z6  \\
		 {4} &  67 &  3 740  & 111  &    12  \\
	\end{tabular}
\end{adjustbox}
\end{center}
\end{table}

In Table~\ref{tab:poisson_3d}  we report the results for the EXACT and INEXACT approaches. The  INEXACT local solvers yield  to a number of iterations that is less than twice the number of iterations of the  EXACT choice. On the contrary, the solving times of the INEXACT local solvers are orders of magnitudes less than the times of the EXACT local solvers. In all cases, the number of iterations is   only mildly dependent on $p$.     
 
\paragraph{Sphere domain test.}  Let $\Omega$ be the unitary sphere with center in the origin that is divided into  7 patches: an inner cube and six patches around it (see Figure \ref{fig:sphere}).
   We remark that the sphere patches are described by NURBS
parametrizations and that  the isogeometric local spaces used for the
discretization are   spaces of  mapped NURBS. However, the INEXACT
local solvers are still built using B-splines and some information  on
the weights of the NURBS basis functions, as well as some information
about the geometry maps, is incorporated into the solvers  as
described in  Section \ref{sec:geo_inclusion}.    
  We choose the problem data so that the exact solution is  $\boldsymbol{u} (x, y, z) = [\cos{x},\ z\sin{y},\ (xyz)^2]^T$. 
 We impose Dirichlet boundary conditions on   $\partial \Omega\cap \{z\leq0\}$, while   Neumann conditions are imposed  on the rest of the boundary, i.e. on $\partial \Omega\cap \{z>0\}$.
Note that in classical IETI setting, this choice of b.c. provides  local problems without a tensor product structure.   
 
  In this test we consider an equal number of element in each parametric direction of each patch, i.e. we set $n_{el}^{(1)}=\dots = n_{el}^{(7)}=:n_{el}$. We report the numerical results in Table \ref{tab:le_sphere}  for an increasing number of $n_{el}$ and for NURBS of degree 4 and 5. The INEXACT approach is orders of magnitude faster that the EXACT one, even though the number of iterations is higher. 
 

\begin{table}[!h]
\caption{Preconditioners comparison -  sphere domain test.} 
\label{tab:le_sphere}
\begin{center}
\begin{adjustbox}{max width=\textwidth}
 \begin{tabular}{c | c c | c c || c c | c c}
 & \multicolumn{4}{c|| }{$p=4$} & \multicolumn{4}{| c}{  $p=5$}\\
 \hline 
   & \multicolumn{2}{c |}{\textbf{EXACT}} & \multicolumn{2}{c||}{\textbf{INEXACT}}  & \multicolumn{2}{| c }{\textbf{EXACT}} & \multicolumn{2}{|c}{\textbf{INEXACT}} \\
  ${n_{el}}$ &   Iter. &  {Time (s)} & Iter. &  {Time (s)} & Iter. &  {Time (s)} & Iter. &  {Time (s)}  \\
  \hline
	 {\z2} & 61  &  \z\z\z\z\z7 &  123 &  \z\z5   & \z65  &  \z\z\z\z\z22  & 146  & \z\z7 \\
	 {\z4} & 73  & \z\z\z\z56  &  146 &  \z\z9   & \z79  & \z\z\z\z160    & 172  &   \z16  \\
	 {\z8} & 87  &  \z2 757  &  184 &   \z26  &  100 &   \z\z5 764  &  225 &  \z56  \\ 
	 {16}  & 97  &    63 699   &  217 &   132   &  105 &  275 500   & 246  &   277    
	\end{tabular}
\end{adjustbox}
\end{center}
\end{table}

%
  
\paragraph{Weak scalability test.} 
 In this test we consider the weak scalability  
of the AF-IETI  preconditioned method: we keep the dimension of each
subproblem fixed and we increase the number of patches. 
In particular, we consider a cube domain $\Omega = (0,1)^3$. We choose
the problem data so that the exact solution is $\boldsymbol{u} (x, y,
z) = [\cos{x},\ z\sin{y},\ (xyz)^2]^T$. We consider     Neumann
boundary conditions on  $\partial \Omega\cap\{x=0\}$ and $\partial
\Omega\cap\{x=1\}$ and Dirichlet boundary conditions on the remaining
faces.   We   split the domain $\Omega$  into an increasing number
$\numpatch$ of equal cubes,   keeping  $p=3$ and   $n_{el}^{(1)}=\dots  = n_{el}^{(\numpatch)}=2^3$.


\begin{table}[!ht]
\caption{Preconditioners comparison - weak scalability test.}
\label{tab:scalability_3d}
\begin{center}
\begin{adjustbox}{max width=\textwidth}
 { \begin{tabular}{c |  c c | c c  }
  & \multicolumn{2}{c |}{\textbf{EXACT}} & \multicolumn{2}{c}{\textbf{INEXACT}} \\
  $\numpatch$  & Iter. &  {Time (s)} & Iter. &  {Time (s)} \\
  \hline
		$ {2^3}$ &  48  &  \z\z\z\z7 & \z90 &  \z2 \\
		$ {3^3}$ & 52 &  \z\z\z57 & \z97 &   \z7 \\
		$ {4^3}$ & 53  &  \z\z275 & 100  &  18 \\
		$ {5^3}$ &  56 &  1 030 & 105 &   36 \\
		$ {6^3}$ &  57  &  3 019& 105 &  63  \\
	\end{tabular}}
\end{adjustbox}
\end{center}
\end{table} 
Table~\ref{tab:scalability_3d} shows that the  number of iterations of
both the EXACT and INEXACT solvers is independent of the number pf
patches, that is, the solvers are scalable. Again, the INEXACT variant
saves orders of magnitudes of computational time.  
   In the scalability test  
the computational  time   reflects the  single-core
sequential solving of  local problems. Clearly, a  parallel
implementation would lead to a decrease of the computational time for
both solvers.  
 

\section{Conclusions}
\label{sec:con}

In this paper, we studied a combination of the FD  solver with a domain decomposition method  of FETI type in a general multi-patch isogeometric setting, where the subdomains of the method coincide with the isogeometric patches.   We focused on the All-Floating  (domain) version of FETI, where both the continuity and Dirichlet boundary conditions are weakly imposed by Lagrange multipliers. 
We built a preconditioner for the resulting saddle-point linear system in which  the  FD method is used in the  inexact solvers   for the local problems.   
We  also proved the spectral estimates that guarantee  the good convergence properties of the  preconditioned iterative solver.
The comparison of the performances of the exact and inexact local solvers in the preconditioner  on three dimensional compressible linear elasticity model problems clearly showed that the inexact choice brings great improvements, in terms of computational time.  Numerical results demonstrated the weak scalability is not affected by introducing the inexact solvers, and that the overall approach is quite robust with respect to the spline degree $p$. The variant of the inexact local solvers that includes also some information on the geometry parametrization performs well also with distorted geometries.
 

We remark that while the present work is focused on IgA, the range of applicability of AF-IETI with FD-based inexact solvers is in fact not limited to IgA problems. 
Indeed, the presented approach is suitable for any discretization where the domain is split into a number of non-overlapping subdomains, on each of which the discretization has a tensor structure, like e.g. the spectral element method.

\section*{Acknowledgements}
The authors were partially supported by the European Research Council through the FP7 Ideas Consolidator Grant HIGEOM n.616563. The  second, third and fourth authors are members of the Gruppo Nazionale Calcolo Scientifico-Istituto Nazionale di Alta Matematica (GNCS-INDAM)  and the second  author  was partially supported by  INDAM-GNCS  ``Finanziamento Giovani Ricercatori 2019-20" for the project ``Efficiente risoluzione dell'equazione di Navier-Stokes in ambito isogeometrico". These supports are gratefully acknowledged.

 \bibliographystyle{plain}
\bibliography{FD_DDM_IGA}

\begin{thebibliography}{10}

\bibitem{da2011some}
L.~Beir{\~a}o~da Veiga, A.~Buffa, J.~Rivas, and G.~Sangalli.
\newblock Some estimates for $h$--$p$--$k$-refinement in {I}sogeometric
  {A}nalysis.
\newblock {\em Numerische Mathematik}, 118(2):271--305, 2011.

\bibitem{da2012overlapping}
L.~Beir{\~a}o~da Veiga, D.~Cho, L.~F. Pavarino, and S.~Scacchi.
\newblock Overlapping {S}chwarz methods for isogeometric analysis.
\newblock {\em SIAM J. Numer. Anal.}, 50(3):1394--1416, 2012.

\bibitem{beirao2013bddc}
L.~Beir{\~a}o~da Veiga, D.~Cho, L.~F. Pavarino, and S.~Scacchi.
\newblock {BDDC} preconditioners for isogeometric analysis.
\newblock {\em Math. Models Methods Appl. Sci.}, 23(06):1099--1142, 2013.

\bibitem{bercovier2015overlapping}
M.~Bercovier and I.~Soloveichik.
\newblock Overlapping non matching meshes domain decomposition method in
  isogeometric analysis.
\newblock {\em arXiv preprint arXiv:1502.03756}, 2015.

\bibitem{bressan2018approximation}
A.~Bressan and E.~Sande.
\newblock Approximation in {FEM}, {DG} and {IGA}: a theoretical comparison.
\newblock {\em Numerische Mathematik}, 143(4):923--942, 2019.

\bibitem{MR3035485}
N.~Collier, L.~Dalcin, D.~Pardo, and V.~M. Calo.
\newblock The cost of continuity: performance of iterative solvers on
  isogeometric finite elements.
\newblock {\em SIAM J. Sci. Comput.}, 35(2):A767--A784, 2013.

\bibitem{MR2880524}
N.~Collier, D.~Pardo, L.~Dalcin, M.~Paszynski, and V.~M. Calo.
\newblock The cost of continuity: a study of the performance of isogeometric
  finite elements using direct solvers.
\newblock {\em Comput. Methods Appl. Mech. Engrg.}, 213/216:353--361, 2012.

\bibitem{cottrell2009isogeometric}
J.~A. Cottrell, T.~J.~R. Hughes, and Y.~Bazilevs.
\newblock {\em Isogeometric analysis: toward integration of CAD and FEA}.
\newblock John Wiley \& Sons, 2009.

\bibitem{MR1900298}
C.~de~Boor.
\newblock {\em A practical guide to splines}, volume~27 of {\em Applied
  Mathematical Sciences}.
\newblock Springer-Verlag, New York, revised edition, 2001.

\bibitem{DEPRENTER2019604}
F.~de~Prenter, C.V. Verhoosel, and E.H. van Brummelen.
\newblock Preconditioning immersed isogeometric finite element methods with
  application to flow problems.
\newblock {\em Comp. Methods Appl. Mech. Engrg.}, 348:604 -- 631, 2019.

\bibitem{Diliberto1951}
S.~Diliberto and E.~Straus.
\newblock On the approximation of a function of several variables by the sum of
  functions of fewer variables.
\newblock {\em Pacific J. Math.}, 1(2):195--210, 1951.

\bibitem{MR2282408}
Z.~Dost\'{a}l, D.~Hor\'{a}k, and R.~Ku\v{c}era.
\newblock Total {FETI}-an easier implementable variant of the {FETI} method for
  numerical solution of elliptic {PDE}.
\newblock {\em Comm. Numer. Methods Engrg.}, 22(12):1155--1162, 2006.

\bibitem{evans2009n}
John~A Evans, Yuri Bazilevs, Ivo Babu{\v{s}}ka, and Thomas~JR Hughes.
\newblock $n$-{W}idths, sup--infs, and optimality ratios for the $k$-version of
  the isogeometric finite element method.
\newblock {\em Comput. Methods Appl. Mech. Engrg.}, 198(21-26):1726--1741,
  2009.

\bibitem{farhat1994optimal}
C.~Farhat, J.~Mandel, and F.~X. Roux.
\newblock Optimal convergence properties of the {FETI} domain decomposition
  method.
\newblock {\em Comp. Methods Appl. Mech. Engrg.}, 115(3-4):365--385, 1994.

\bibitem{MR3618550}
C.~Farhat and F.~X. Roux.
\newblock A method of finite element tearing and interconnecting and its
  parallel solution algorithm.
\newblock {\em Internat. J. Numer. Methods Engrg.}, 32(6):1205--1227, 1991.

\bibitem{Golub2012}
G.~H. Golub and C.~F. Van~Loan.
\newblock {\em Matrix computations}.
\newblock JHU Press, 2012.

\bibitem{MR3610089}
C.~Hofer and U.~Langer.
\newblock Dual-primal isogeometric tearing and interconnecting solvers for
  multipatch d{G}-{I}g{A} equations.
\newblock {\em Comput. Methods Appl. Mech. Engrg.}, 316:2--21, 2017.

\bibitem{hofer2017inexact}
C.~Hofer, U.~Langer, and S.~Takacs.
\newblock Inexact dual-primal isogeometric tearing and interconnecting methods.
\newblock In {\em International Conference on Domain Decomposition Methods},
  pages 393--403. Springer, 2017.

\bibitem{hughes2005isogeometric}
T.~J.~R. Hughes, J.~A. Cottrell, and Y.~Bazilevs.
\newblock Isogeometric analysis: {CAD}, finite elements, {NURBS}, exact
  geometry and mesh refinement.
\newblock {\em Comput. Methods Appl. Mech. Engrg.}, 194(39-41):4135--4195,
  2005.

\bibitem{MR1787293}
A.~Klawonn and O.~B. Widlund.
\newblock A domain decomposition method with {L}agrange multipliers and inexact
  solvers for linear elasticity.
\newblock {\em SIAM J. Sci. Comput.}, 22(4):1199--1219, 2000.

\bibitem{kleiss2012ieti}
S.~K. Kleiss, C.~Pechstein, B.~J{\"u}ttler, and S.~Tomar.
\newblock {IETI}--isogeometric tearing and interconnecting.
\newblock {\em Comput. Methods Appl. Mech. Engrg.}, 247:201--215, 2012.

\bibitem{Lynch1964}
R.~E. Lynch, J.~R. Rice, and D.~H. Thomas.
\newblock Direct solution of partial difference equations by tensor product
  methods.
\newblock {\em Numerische Mathematik}, 6(1):185--199, 1964.

\bibitem{mandel1996convergence}
J.~Mandel and R.~Tezaur.
\newblock Convergence of a substructuring method with {L}agrange multipliers.
\newblock {\em Numerische Mathematik}, 73(4):473--487, 1996.

\bibitem{Mantzaflaris2017}
A.~Mantzaflaris, B.~J{\"u}ttler, B.~N. Khoromskij, and U.~Langer.
\newblock Low rank tensor methods in {G}alerkin-based isogeometric analysis.
\newblock {\em Comput. Methods Appl. Mech. Engrg.}, 316:1062--1085, 2017.

\bibitem{montardini2018space}
M.~Montardini, M.~Negri, G.~Sangalli, and M.~Tani.
\newblock Space-{T}ime {L}east-{S}quares {I}sogeometric {M}ethod for
  {P}arabolic {P}roblems.
\newblock {\em arXiv preprint arXiv:1809.10026}, 2018.

\bibitem{MONTARDINI2018}
M.~Montardini, G.~Sangalli, and M.~Tani.
\newblock Robust isogeometric preconditioners for the {S}tokes system based on
  the {F}ast {D}iagonalization method.
\newblock {\em Comput. Methods Appl. Mech. Engrg.}, 2018.

\bibitem{of2009all}
G.~Of and O.~Steinbach.
\newblock The all-floating boundary element tearing and interconnecting method.
\newblock {\em Journal of numerical mathematics}, 17(4):277--298, 2009.

\bibitem{pavarino2016isogeometric}
L.~F. Pavarino and S.~Scacchi.
\newblock Isogeometric block {FETI}-{DP} preconditioners for the {S}tokes and
  mixed linear elasticity systems.
\newblock {\em Comput. Methods Appl. Mech. and Engrg.}, 310:694--710, 2016.

\bibitem{pechPHD}
C.~Pechstein.
\newblock Finite and {B}oundary {E}lement {T}earing and {I}nterconnecting
  {M}ethods for {M}ultiscale {E}lliptic {P}artial {D}ifferential {E}quations.
\newblock {\em PhD thesis, JKU}, 2008.

\bibitem{pechstein2012finite}
C.~Pechstein.
\newblock {\em Finite and boundary element tearing and interconnecting solvers
  for multiscale problems}, volume~90.
\newblock Springer Science \& Business Media, 2012.

\bibitem{piegl2012nurbs}
L.~Piegl and W.~Tiller.
\newblock {\em The NURBS book}.
\newblock Springer Science \& Business Media, 2012.

\bibitem{sande2019sharp}
E.~Sande, C.~Manni, and H.~Speleers.
\newblock Sharp error estimates for spline approximation: {E}xplicit constants,
  n-widths, and eigenfunction convergence.
\newblock {\em Math. Models Methods Appl. Sci.}, pages 1--31, 2019.

\bibitem{MR3572372}
G.~Sangalli and M.~Tani.
\newblock Isogeometric preconditioners based on fast solvers for the
  {S}ylvester equation.
\newblock {\em SIAM J. Sci. Comput.}, 38(6):A3644--A3671, 2016.

\bibitem{MR3811621}
G.~Sangalli and M.~Tani.
\newblock Matrix-free weighted quadrature for a computationally efficient
  isogeometric {$k$}-method.
\newblock {\em Comput. Methods Appl. Mech. Engrg.}, 338:117--133, 2018.

\bibitem{takacs2018robust}
S.~Takacs.
\newblock Robust approximation error estimates and multigrid solvers for
  isogeometric multi-patch discretizations.
\newblock {\em Math. Models Methods Appl. Sci.}, 28(10):1899--1928, 2018.

\bibitem{takacs2016approximation}
S.~Takacs and T.~Takacs.
\newblock Approximation error estimates and inverse inequalities for
  {B}-splines of maximum smoothness.
\newblock {\em Math. Models Methods Appl. Sci.}, 26(07):1411--1445, 2016.

\bibitem{MR2104179}
A.~Toselli and O.~Widlund.
\newblock {\em Domain decomposition methods---algorithms and theory}, volume~34
  of {\em Springer Series in Computational Mathematics}.
\newblock Springer-Verlag, Berlin, 2005.

\bibitem{vazquez2016new}
R.~V{\'a}zquez.
\newblock A new design for the implementation of isogeometric analysis in
  {O}ctave and {M}atlab: Geo{PDE}s 3.0.
\newblock {\em Comput. Math. Appl.}, 72(3):523--554, 2016.

\bibitem{Wachspress1984}
E.~L. Wachspress.
\newblock Generalized {ADI} preconditioning.
\newblock {\em Comput. Math. Appl.}, 10(6):457--461, 1984.

\end{thebibliography}

\end{document}